\newcommand{\bit}{\begin{itemize}}
\newcommand{\eit}{\end{itemize}}
\newcommand{\real}{\mathbb{R}}
\newcommand{\N}{\mathbb{N}}
\newcommand{\Z}{\mathbb{Z}}
\newtheorem{theorem}{Theorem}[section]
\newtheorem{definition}[theorem]{Definition}
\newtheorem{lemma}[theorem]{Lemma}
\newtheorem{remark}[theorem]{Remark}
\title{An extension operator for Sobolev spaces with mixed weights}
\author{
Markus Hansen\footnote{Philipps-University Marburg,  FB12 Mathematics and Computer Science, Hans-Meerwein Stra\ss{}e, Lahnberge, 35032 Marburg, Germany. Email: \href{mailto:hansen@mathematik.uni-marburg.de}{hansen@mathematik.uni-marburg.de}}
\thanks{The work of this author has been supported by Deutsche Forschungsgemeinschaft (DFG), Grant No. 360/22-1.}
\qquad
Cornelia Schneider\footnote{Friedrich-Alexander University Erlangen-Nuremberg, Applied Mathematics III, Cauerstr. 11, 91058 Erlangen, Germany. Email: \href{mailto:schneider@math.fau.de}{schneider@math.fau.de}}\ \thanks{The work of this author has been supported by Deutsche Forschungsgemeinschaft (DFG), Grant No. SCHN 1509/1-2.}
\qquad 
Flóra O. Szemenyei\footnote{\emph{Corresponding author}. Friedrich-Alexander University Erlangen-Nuremberg, Applied Mathematics III, Cauerstr. 11, 91058 Erlangen, Germany. Email: \href{mailto:szemenyei@math.fau.de}{szemenyeif@math.fau.de}\vspace{0.2cm}}
}
\begin{document}
\maketitle

\footnotetext{{\em Math Subject Classifications. Primary:}   46E35, 47A57.   {\em Secondary:} 35B65. }

\footnotetext{\textit{Keywords and Phrases.} weighted Sobolev spaces, mixed weights, extension operator, polyhedral cone.}

\begin{abstract}
We provide an  extension operator for weighted Sobolev spaces  on bounded polyhedral cones $K$ involving a mixture of weights, which measure the distance to the vertex and the edges of the cone, respectively. 
Our results are based on Stein's extension operator \cite{Stein} for Sobolev spaces and generalize \cite{Han15}. 
\end{abstract}



\section{Introduction}
In this article we construct an extension operator for weighted Sobolev spaces $V_{\beta,\delta}^{l,p}(K)$ on  polyhedral cones $K$ involving mixed weights, which measure the distance to the vertex and the edges of the cone as displayed by the parameters $\beta$ and $\delta$, respectively. 
Our construction is based on the original  extension operator from Stein \cite{Stein} for classical Sobolev spaces on sufficiently smooth domains, which was generalized  in \cite{Han15} by Hansen to the setting of  weighted Sobolev spaces involving one weight function only. \\
 Weighted Sobolev spaces are particularly  important when it comes to regularity theory for solutions of partial differential equations (PDEs) on non-smooth domains: in this case  singularities of the solutions at the boundary  may occur which diminish the Sobolev regularity, cf. \cite{JK95},   but can be compensated with suitable weight functions. In particular, spaces with mixed weights are needed when studying stochastic PDEs as is demonstrated in  \cite{CLK19, Cio20}. 
 The weighted Sobolev spaces we are interested in  can be seen as generalizations of the so-called Kondratiev spaces which appeared in the 60s in \cite{Kon1, Kon2} and were studied in detail in \cite{SMCW}. 
Later on  more general spaces were considered by  
Kufner, S\"andig \cite{Ku},  Babuska, Guo \cite{BG97}, Nistor, Mazzucato \cite{NistorMazzucato}, and Costabel, Dauge, Nicaise \cite{CDN10}, to mention at least a few contributions in this context.\\ 
 The weighted Sobolev spaces $V_{\beta,\delta}^{l,p}(K)$ with parameters  $l\in \N_0$, $1\leq p<\infty$, $\beta\in \real$, and $\delta=(\delta_1,\ldots, \delta_n)\in \real^n$, were  introduced and studied in detail by Maz'ya, Rossmann in \cite{MR10} and contain all measurable functions $u$ such that the norm 
\[
\|u| V_{\beta,\delta}^{l,p}(K)\|:=
\displaystyle \bigg( \int_{K}\sum_{|\alpha|\leq l} 
\rho_{0}(x)^{p(\beta-l+|\alpha|)}\prod_{k=1}^{n} \bigg( \dfrac{r_k(x)}{\rho_{0}(x)}  \bigg)^{p(\delta_k-l+|\alpha|)} |\partial^{\alpha}u(x)|^p\: dx
\bigg)^{1/p}
\] 
is finite. Here  $\rho_0$ denotes the regularized distance of a point $x$ to the vertex of the cone $K$ and $r_k(x)$ the (regularized) distances to the respective edges $M_k$ of the cone. It is precisely this mixture of the different weights involved and their interplay, which allows for an even finer description when investigating the singularities of the solutions of PDEs on polyhedral cones or even general  domains of polyhedral type.\\
Our construction of the extension operator in the weighted Sobolev spaces $V_{\beta,\delta}^{l,p}$ will be done in three steps: 
 First we show that Stein's definition of an extension operator for special Lipschitz domains remains bounded in the framework of weighted Sobolev spaces  denoted by $V_{\delta}^{l,p}$  where the weight involved (displayed by the parameter $\delta \in \real$) measures the distance  to the $x_1$-axis. In particular, the result holds for  unbounded wedges (which are special Lipschitz domains) where the weight in the respective Sobolev spaces now measures the distance to the edge of the wedge. In the next step we establish an extension operator for a fixed layer $K'$ (bounded away from the vertex) of the cone $K$ in the spaces $V_{\beta,\delta}^{l,p}$ with mixed weights.  The construction uses the first  result, since the layer can be covered by finitely many diffeomorphic images of bounded wedges.  
 Finally, in a third step we decompose the cone into dyadic layers $K_j$, where $j\in \mathbb{Z}$, and construct a family of extension operators for each  layer, which is uniformly bounded  in the norms of $V_{\beta,\delta}^{l,p}$ with respect to $j$. Afterwards we glue together  these operators properly
and use  the localization of the norm of $V_{\beta,\delta}^{l,p}$, which yields  an  extension  operator for the whole cone. \\
Our motivation for these kinds of studies is as follows: in a forthcoming paper we will use regularity estimates for   solutions of elliptic and parabolic PDEs on polyhedral cones in the spaces $V_{\beta,\delta}^{l,p}(K)$  in order to 
study the smoothness $\alpha>0$ of these solutions in the specific scale 
\begin{equation}\label{adaptivityscale}
    B^{\alpha}_{\tau, \tau}(K), \qquad   \frac{1}{\tau}=\frac{\alpha}{d}+\frac 1p, 
\end{equation} 
of Besov spaces.  
It is well known that the regularity $\alpha$ in these spaces determines the approximation order that can be achieved by adaptive  approximation  schemes, see \cite{DDV97} for further details on this subject. 
In order to justify the use of adaptive algorithms (compared to non-adaptive ones) for the PDEs under investigation,  one needs to know that the Besov regularity $\alpha$  of the solutions is sufficiently high.  
Our main tool here will be an embedding for  the weighted Sobolev spaces $V_{\beta,\delta}^{l,p}$  into the scale of Besov spaces \eqref{adaptivityscale}. In this context the  extension operator we construct below will  help to improve the upper bound of the smoothness $\alpha$ considerably, leading to a far better approximation rate of the adaptive algorithms compared to  \cite{DS08}. However,  the results for the extension operator established in this paper are of interest on their own, which is why we decided to publish them  separately. \\

   The paper is organized as follows. In Section 2 we introduce the weighted Sobolev spaces on polyhedral cones  and study some of  their relevant  properties. Afterwards, in Section 3, we construct an extension operator for these spaces in three steps as explained above. 

\section{Preliminaries}

\paragraph{Notation}

We start by collecting some general notation used throughout the paper.

As usual,   $\N$ stands for the set of all natural numbers, $\N_0=\mathbb N\cup\{0\}$, $\Z$ denotes the integers, and 
$\real^d$, $d\in\N$, is the $d$-dimensional real Euclidean space with $|x|$, for $x\in\real^d$, denoting the Euclidean norm of $x$. 
Let $\N_0^d$ be the set of all multi-indices, $\alpha = (\alpha_1, \ldots,\alpha_d)$ with 
$\alpha_j\in\N_0$ and $|\alpha| := \sum_{j=1}^d \alpha_j$. For partial derivatives $\partial^\alpha f=\frac{\partial^{|\alpha|}f}{\partial x^\alpha}$ we will occasionally also write $f_{x^\alpha}$. Furthermore, $B(x,r)$  is the open ball 
of radius $r >0$ centered at $x$, and for a measurable set $M\subset\real^d$ we denote by $|M|$ its Lebesgue measure.

We denote by  $c$ a generic positive constant which is independent of the main parameters, but its value may change from line to line. 
The expression $A\lesssim B$ means that $ A \leq c\,B$. If $A \lesssim
B$ and $B\lesssim A$, then we write $A \sim B$.  


Throughout the paper 'domain' always stands for an open and connected set.  
The test functions on a domain $\Omega$   are denoted by  $C^{\infty}_0(\Omega)$.    
Let $L_p(\Omega)$, $1\leq p\leq \infty$, be the Lebesgue spaces on $\Omega$ as usual.  
We denote by ${C}(\Omega)$  the space of all bounded  continuous functions $f:\Omega\rightarrow \mathbb{R}$  
and  ${C}^k(\Omega)$, $k\in \N_0$, is the space of all functions $f\in {C}(\Omega)$ such that $\partial^{\alpha}f\in {C}(\Omega)$ for all $\alpha\in\N_0$ with $|\alpha|\leq k$, 
endowed with the norm $\sum_{|\alpha|\leq k}\sup_{x\in \Omega}|\partial^{\alpha}f(x)|$.\\
Let $m \in \N_0$ and  $1\le p \leq \infty$. Then
 $W^{m}_p(\Omega)$ denotes the standard \textcolor{black}{$L_p$-Sobolev spaces of order $m$}  on the domain $\Omega$, equipped with the norm
$
\|\, u \, |W^m_{p}(\Omega)\| := \Big(\sum_{|\alpha|\leq m} \int_\Omega |\partial^\alpha u(x)|^p\,dx\Big)^{1/p}. 
$ 



\paragraph{Polyhedral cone}\label{polyhedralcone}
We mainly consider function spaces defined on polyhedral cones in the sequel. Let 
$$K:=\{ x \in \mathbb{R}^3: \: 0<|x|<\infty,\: x/|x|\in \Omega  \}$$
be an infinite cone in $ \mathbb{R}^3 $ with vertex at the origin. Suppose that the boundary $\partial K$ consists of the vertex $x=0$, the edges (half lines) $M_1,...,M_n$, and smooth faces $\Gamma_1,...,\Gamma_n$. Hence, $\Omega\cap S^2$ is a domain of polygonal type on the unit sphere $S^2$ with sides $\Gamma_k \cap S^2$. 
Moreover, we consider the bounded polyhedral cone $\tilde{K}$ obtained via  truncation \[\tilde{K}:=K\cap B(0,r).\]
\begin{figure}[ht] 
\begin{minipage}{0.5\textwidth}             
    \def\svgwidth{185pt}    
    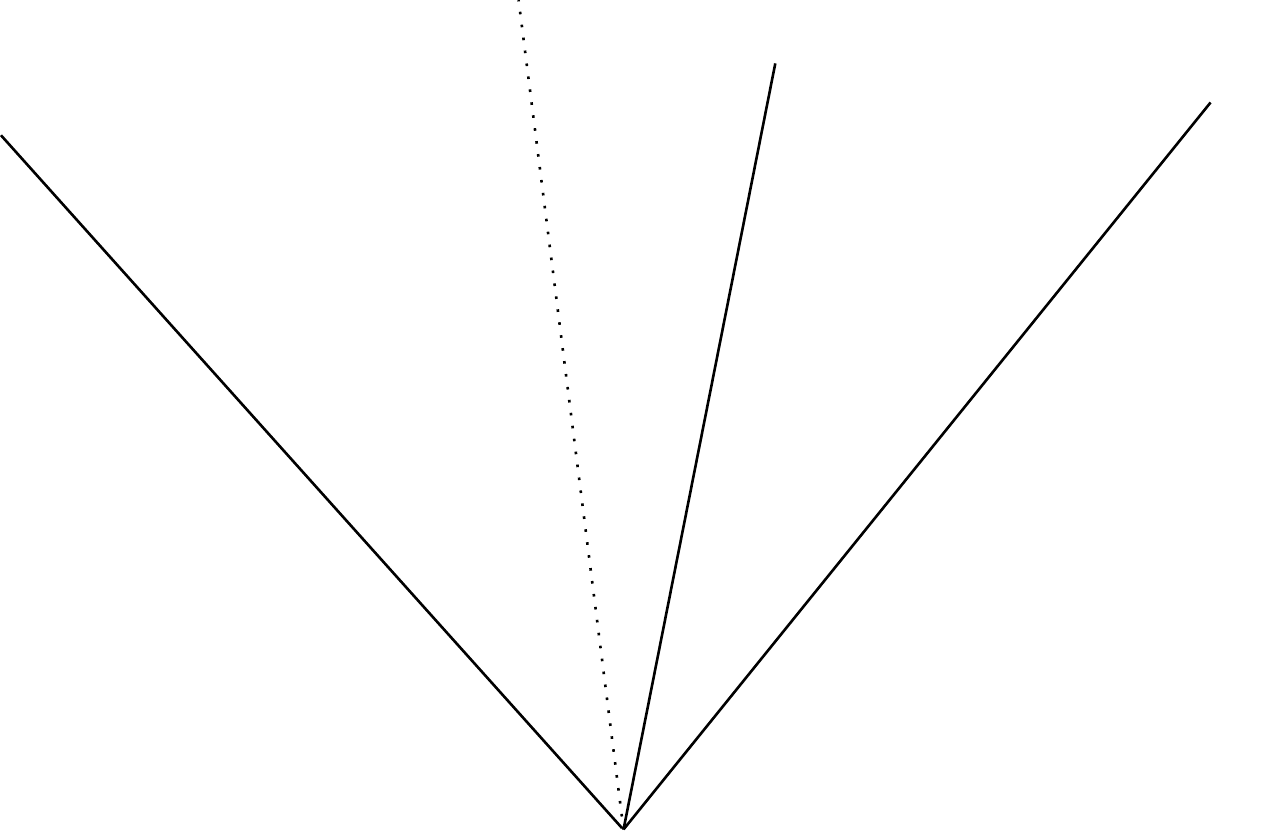  
    \caption{Infinite polyhedral cone $K$} 
    \label{fig:inf}
    \end{minipage}\hfill \begin{minipage}{0.5\textwidth}  
           \centering              
    \def\svgwidth{130pt}    
    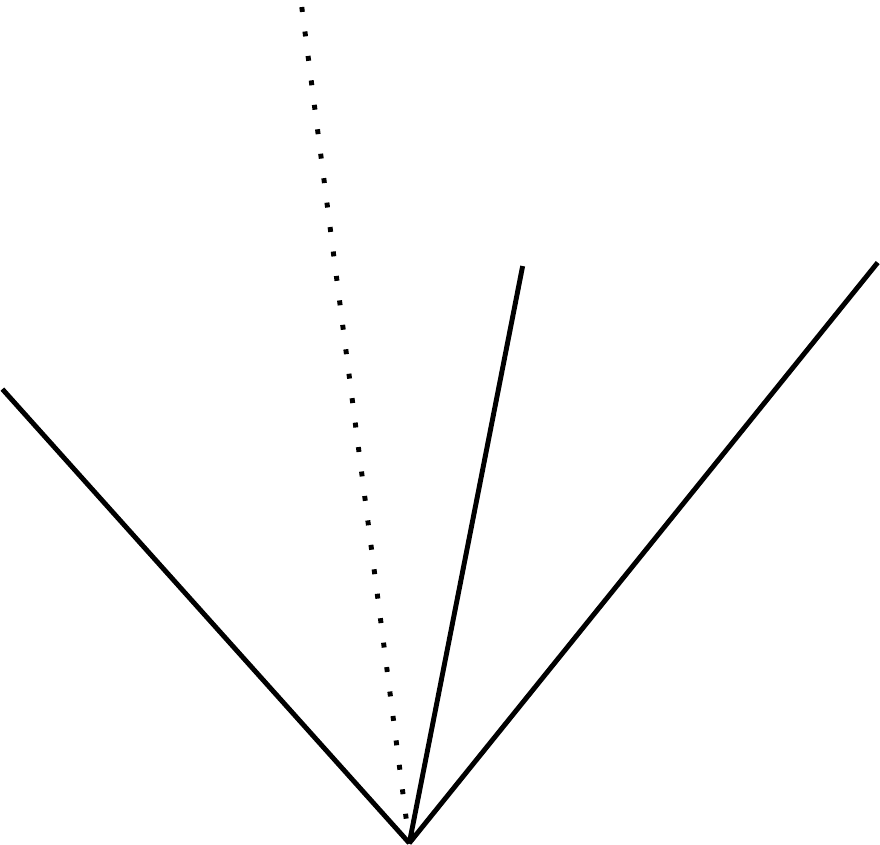
    \caption{Bounded polyhedral cone $\tilde{K}$}
    \label{fig:bound}  
    \end{minipage}\\
\end{figure}

The singular points of the polyhedral cone $K$ are those $x\in \partial K$ for which for any $\varepsilon>0$ the set $\partial K \cap B(x, \varepsilon)$ is not smooth, i.e., the vertex $0$ and the edges $M_1,...,M_n$. 
When we consider the bounded cone $\tilde{K}$  we omit the non-smooth points induced by the truncation and also consider $S=\{0\}\cup M_1 \cup...\cup M_n$, which in this case is not the entire singularity set.\\

\subsection{Weighted Sobolev spaces and their properties}
\label{Vspace}

\begin{definition}
Let $K$ be a (bounded or unbounded) polyhedral cone in $\mathbb{R}^3$ and  $S=\{0\}\cup M_1 \cup...\cup M_n$. Then the space $ V_{\beta,\delta}^{l,p}(K,S)$ is defined as the closure of the set $$C_*^{\infty}(K,S):=\{ u|_K: \: u\in C_0^{\infty}(\mathbb{R}^3\setminus S) \}$$ 
with respect to the norm 
\begin{align}
\label{norm}
&\|u| V_{\beta,\delta}^{l,p}(K,S)\|:=
\displaystyle \bigg( \int_{K}\sum_{|\alpha|\leq l} 
\rho_{0}(x)^{p(\beta-l+|\alpha|)}\prod_{k=1}^{n} \bigg( \dfrac{r_k(x)}{\rho_{0}(x)}  \bigg)^{p(\delta_k-l+|\alpha|)} |\partial^{\alpha}u(x)|^p\: dx
\bigg)^{1/p}, 
\end{align}
where $  \beta \in \mathbb{R}$, $l\in\mathbb{N}_0$, $\delta=(\delta_1,...,\delta_n)\in \mathbb{R}^n$, $1\leq p< \infty$, $\rho_0(x)=\mathrm{dist}(0,x)$ denotes the distance to the vertex, and $r_j(x):= \mathrm{dist} (x,M_j) $ the distance to the edge $M_j$.
\end{definition}

\begin{remark}
We collect some remarks and properties concerning the weighted Sobolev spaces $ V_{\beta,\delta}^{l,p}(K)$.

\begin{itemize}
\item In the sequel if there is no confusion we  omit $S$ from the notation and write shortly $V_{\beta,\delta}^{l,p}(K)$.
\item The space $ V_{\beta,\delta}^{l,p}(K)$ is a Banach space for $1\leq p <\infty$. The proof can be derived from a generalized result, see \cite[p. 18, Theorem 3.6]{Kuf}.

    \item $V_{\beta,\delta}^{l,p}(K) \subset L_p(K)$ for $l\geq\beta $ and $l\geq\delta_k,\: k=1,...,n$.
      
    \item We have the following embeddings which one obtains easily from the definition of the spaces $ V_{\beta,\delta}^{l,p}(K)$: 
    $$V_{\beta,\delta}^{l,p}(K)\subset V_{\beta-1,\delta-1}^{l-1,p}(K)\subset ...\subset V_{\beta-l,\delta-l}^{0,p}(K). $$

      \item There exist regularized versions of the distance functions $\rho_0$ and $r_1$,...,$r_n$. We denote them by $\widetilde{\rho}_0, \widetilde{r}_1,...,\widetilde{r}_n$. According to \cite[Theorem VI.2.2, p.171]{Stein}, these functions defined on $\overline{K}$ have nonnegative function values and are infinitely often differentiable. Moreover, there exist positive constants $A_0, B_0,C_{\alpha,0}$ such that 
      \begin{align*}
      A_0 \rho_0(x)\leq \widetilde{\rho}_0(x)\leq B\rho_0(x),\quad x\in K,
       \end{align*}
    and for all $\alpha\in\mathbb{N}_0^n$, 
    \begin{align*}
    |\partial^{\alpha}\widetilde{\rho}_0(x)|\leq C_{\alpha,0}\rho_0^{1-|\alpha|}(x),\quad x\in K.
    \end{align*}
    One has similar results for $\widetilde{r_i},\: i=1,...,n$.
    
    \item Replacing $\rho_0,r_1,...,r_n$ by $\widetilde{\rho}_0,\widetilde{r}_1,...,\widetilde{r}_n$ in the norm of $V_{\beta,\delta}^{l,p}(K)$, one can prove that the operator
    \begin{align}
    \label{aug12}
    T_{\beta',\delta'}& :V_{\beta,\delta}^{l,p}(K)  \longrightarrow V_{\beta-\beta',\delta-\delta'}^{l,p}(K),\qquad 
    u \mapsto \rho_0^{\beta'}(x) \prod_{k=1}^{n}\left(\frac{r_k(x)}{\rho_0(x)}   \right)^{\delta_k'}u,
    \end{align}
     is an isomorphism, where $\delta-\delta'=(\delta_1-\delta_1',...,\delta_n-\delta_n')$. The inverse operator is given by 
   $  ( T_{\beta',\delta'})^{-1}= T_{-\beta',-\delta'}.$ 
   \item   A function $\varphi\in C^l(K)$ is a pointwise multiplier in $V_{\beta,\delta}^{l,p}(K)$, i.e., for  
     $u\in V_{\beta,\delta}^{l,p}(K)$ we have
   \begin{align}
   \label{okt15d}
   \|\varphi u|V_{\beta,\delta}^{l,p}(K)\|\leq c \|u|V_{\beta,\delta}^{l,p}(K)\|.
   \end{align}
   \item The weighted Sobolev spaces $ V_{\beta,\delta}^{l,p}(K)$ are refinements of the Kondratiev spaces  $\mathcal{K}_{a,p}^m(K)$, which for $m\in \mathbb{N}_0$, $1\leq p<\infty $,  and $a\in \mathbb{R}$  are  defined as the collection of all measurable functions such that 
	 \begin{align*}
	 \|u|\mathcal{K}_{a,p}^m(K)\|:=\left(
	 \displaystyle\sum_{|\alpha|\leq m}\int_{K} |\rho(x)^{|\alpha|-a}\partial^{\alpha}u(x)|^p\: dx
	 \right)^{1/p}<\infty,
	 \end{align*}
	 with  weight function $
	\rho(x):=\min(1,\mathrm{dist}(x,S))$ for $x\in K$. 
   In particular,  the  scales coincide if
   $$m=l,\: \delta=(\delta_1,...,\delta_n)=(l-a,...,l-a)\quad \text{and}\quad \beta=l-a.$$ To show this, one has  to prove that the power of the weight function of $\mathcal{K}_{a,p}^m(K)$ is equivalent to the power of the weight function of $ V_{\beta,\delta}^{l,p}(K)$ with these parameter assumptions, i.e.,
   $$\rho_{0}(x)^{\beta-l+|\alpha|}\prod_{k=1}^{n} \bigg( \dfrac{r_k(x)}{\rho_{0}(x)}  \bigg)^{\delta_k-l+|\alpha|} \sim \rho(x)^{|\alpha|-a}. $$
   This follows from \cite[p. 90, Subsection 3.1.1, (3.1.2)]{MR10}. 
   \item As an alternative approach to Definition \ref{Vspace} one could  define spaces $\tilde{V}_{\beta,\delta}^{l,p}(K)$, which contain all measurable functions such that the norm \eqref{norm} is finite, and investigate under which conditions the set $C^{\infty}_{\ast}(K,S)$ is dense in $\tilde{V}_{\beta,\delta}^{l,p}(K)$, i.e., when these spaces coincide with the spaces ${V}_{\beta,\delta}^{l,p}(K)$. This interesting question is postponed to a forthcoming paper.  
   \end{itemize}
 
\end{remark}

   Next we provide an equivalent norm in $ V_{\beta,\delta}^{l,p}(K) $. For this let 
  $\varphi_{j}$ be infinitely differentiable functions depending only on $\rho_0(x)=|x|$ such that 
   \begin{align}
   \label{zeta}
   \text{supp }\varphi_{j}\subset \{2^{-j-1}<|x|<2^{-j+1}\},\quad |\partial^{\alpha}\varphi_{j}(\cdot)|\leq c_{\alpha} 2^{|\alpha| j},\quad \displaystyle\sum_{j=j_0}^{\infty}\varphi_{j}=1,
   \end{align}
   where $c_{\alpha}$ are constants independent of $j$. 
   
   \begin{remark}
   	If $K$ is a bounded polyhedral cone, then w.l.o.g. we assume $j_0=0$. If it is unbounded, then $j_0=-\infty$ is required.
   \end{remark}

The following lemma is a direct consequence of the localization result in \cite[p.~91, Lemma 3.1.2]{MR10} and has a more convenient form for our setting in order to prove the existence of the extension operator.

\begin{lemma}
	\label{lemma2}
	Let $K$ be a polyhedral cone,  $l\in \mathbb{N}_0$, $ \beta\in \mathbb{R}$, $\delta\in\mathbb{R}^n $, and $1\leq p<\infty$. 
	Then for all $u\in V_{\beta,\delta}^{l,p}(K)$, 
	\begin{align*}
	\|u|V_{\beta,\delta}^{l,p}(K)\|^p \sim \displaystyle\sum_{j=j_0}^{\infty}\|\varphi_j u|V_{\beta,\delta}^{l,p}(K)\|^p,
	\end{align*}
	where $(\varphi_j)_{j\geq j_0}$ is  resolution of unity satisfying \eqref{zeta} as above.
\end{lemma}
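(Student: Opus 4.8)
The plan is to establish both inequalities directly from the properties \eqref{zeta} of the resolution of unity $(\varphi_j)_{j\ge j_0}$, relying on three elementary observations. First, the supports overlap at most twofold: since $\operatorname{supp}\varphi_j\subset\{2^{-j-1}<|x|<2^{-j+1}\}$, every $x\in K$ lies in $\operatorname{supp}\varphi_j$ for at most two indices $j$, so $\sum_j\chi_{\operatorname{supp}\varphi_j}\le 2$. Second, on $\operatorname{supp}\varphi_j$ one has $\rho_0(x)=|x|\sim 2^{-j}$, hence the bounds in \eqref{zeta} become $|\partial^\alpha\varphi_j(x)|\lesssim 2^{|\alpha|j}\sim\rho_0(x)^{-|\alpha|}$ with constants independent of $j$ --- this is the point that makes the $\varphi_j$ \emph{uniform} pointwise multipliers at the respective dyadic scales. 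Third, $r_k(x)\le\rho_0(x)$ for every $x\in K$ and $k=1,\dots,n$, because each edge $M_k$ emanates from the vertex $0$; in particular $r_k(x)/\rho_0(x)\in(0,1]$.

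For the estimate $\|u|V_{\beta,\delta}^{l,p}(K)\|^p\lesssim\sum_{j\ge j_0}\|\varphi_j u|V_{\beta,\delta}^{l,p}(K)\|^p$ I would argue pointwise. Since $\sum_j\varphi_j\equiv 1$, we have $\partial^\alpha u=\sum_j\partial^\alpha(\varphi_j u)$ for every multi-index $|\alpha|\le l$, and for a.e.\ $x$ this sum has at most two nonzero terms, whence $|\partial^\alpha u(x)|^p\le 2^{p-1}\sum_j|\partial^\alpha(\varphi_j u)(x)|^p$ by convexity. Multiplying by the weight $\rho_0(x)^{p(\beta-l+|\alpha|)}\prod_{k=1}^n\big(r_k(x)/\rho_0(x)\big)^{p(\delta_k-l+|\alpha|)}$, summing over $|\alpha|\le l$ and integrating over $K$ gives the claim with constant $2^{p-1}$.

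For the reverse inequality I would expand each term by the Leibniz rule, $\partial^\alpha(\varphi_j u)=\sum_{\gamma\le\alpha}\binom{\alpha}{\gamma}\,\partial^\gamma\varphi_j\,\partial^{\alpha-\gamma}u$, and insert $|\partial^\gamma\varphi_j(x)|\lesssim\rho_0(x)^{-|\gamma|}$ on $\operatorname{supp}\varphi_j$. The decisive bookkeeping is the identity
\[
\rho_0^{p(\beta-l+|\alpha|)}\prod_{k=1}^n\Big(\tfrac{r_k}{\rho_0}\Big)^{p(\delta_k-l+|\alpha|)}\rho_0^{-p|\gamma|}
=\rho_0^{p(\beta-l+|\alpha-\gamma|)}\prod_{k=1}^n\Big(\tfrac{r_k}{\rho_0}\Big)^{p(\delta_k-l+|\alpha|)},
\]
in which the $\rho_0$-powers telescope exactly; since $|\alpha|\ge|\alpha-\gamma|$ and $r_k/\rho_0\le 1$, the remaining product is $\le\prod_k(r_k/\rho_0)^{p(\delta_k-l+|\alpha-\gamma|)}$. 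Thus every Leibniz term is dominated by the $(\alpha-\gamma)$-summand of the integrand defining $\|u|V_{\beta,\delta}^{l,p}(K)\|^p$, which, since $|\alpha-\gamma|\le l$, yields $\|\varphi_j u|V_{\beta,\delta}^{l,p}(K)\|^p\lesssim\int_{\operatorname{supp}\varphi_j\cap K}\sum_{|\alpha|\le l}\rho_0^{p(\beta-l+|\alpha|)}\prod_k(r_k/\rho_0)^{p(\delta_k-l+|\alpha|)}|\partial^\alpha u|^p\,dx$ with a constant independent of $j$. Summing over $j$ and invoking the twofold overlap of the supports gives $\sum_j\|\varphi_j u|V_{\beta,\delta}^{l,p}(K)\|^p\lesssim\|u|V_{\beta,\delta}^{l,p}(K)\|^p$.

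The only genuinely delicate point, and the place I would be most careful, is the uniformity in $j$ of the multiplier estimate in the last step; it rests precisely on $|\partial^\alpha\varphi_j|\lesssim 2^{|\alpha|j}\sim\rho_0^{-|\alpha|}$ together with $r_k\le\rho_0$. This is exactly the mechanism underlying \cite[p.~91, Lemma~3.1.2]{MR10}, from which Lemma~\ref{lemma2} can alternatively be obtained by a partition-independence argument: writing $\varphi_j=\varphi_j\sum_{|i-j|\le 1}\zeta_i$ for the particular dyadic resolution $(\zeta_i)$ used there and applying the uniform multiplier bound gives $\|\varphi_j u\|^p\lesssim\sum_{|i-j|\le 1}\|\zeta_i u\|^p$; summing over $j$, together with the symmetric estimate, produces the stated equivalence. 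Everything else reduces to the Leibniz rule and the elementary inequality $|a_1+a_2|^p\le 2^{p-1}(|a_1|^p+|a_2|^p)$.
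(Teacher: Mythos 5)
Your argument is correct, and it is genuinely different from what the paper does: the paper offers no proof at all for Lemma \ref{lemma2}, simply declaring it ``a direct consequence of the localization result in [MR10, p.~91, Lemma 3.1.2]'', whereas you give a short, self-contained proof from first principles. All the load-bearing steps check out: the supports of the $\varphi_j$ have overlap at most $2$, so the direction $\|u\|^p\lesssim\sum_j\|\varphi_j u\|^p$ follows from $u=\sum_j\varphi_j u$ and convexity; for the converse, on $\operatorname{supp}\varphi_j$ one has $\rho_0(x)\sim 2^{-j}$ with absolute constants, so $|\partial^\gamma\varphi_j|\lesssim 2^{|\gamma|j}\sim\rho_0^{-|\gamma|}$ uniformly in $j$, the $\rho_0$-exponents telescope exactly as you display, and the inequality $r_k\le\rho_0$ (valid because $0\in\overline{M_k}$) lets you trade $(r_k/\rho_0)^{p(\delta_k-l+|\alpha|)}$ for $(r_k/\rho_0)^{p(\delta_k-l+|\alpha-\gamma|)}$ since $|\alpha|\ge|\alpha-\gamma|$. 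What your route buys is transparency: it makes visible exactly which structural features of the weight (homogeneity of $\rho_0$ on dyadic shells, $r_k/\rho_0\le 1$) drive the localization, it works verbatim for $V_{\beta,\delta}^{l,p}(\real^3,S)$ as used later in Step 3 of the proof of Theorem \ref{extop}, and it immediately explains the Remark following the lemma that only $\sum_j\varphi_j\sim 1$ is needed (replace $\varphi_j$ by $\varphi_j/\sum_i\varphi_i$ in the first direction). Two cosmetic points you may wish to make explicit: the estimates are first proved for $u\in C_*^\infty(K,S)$ and extend to the closure by density, and in the first direction one should note that $\partial^\alpha(\varphi_j u)(x)=0$ for $x\notin\operatorname{supp}\varphi_j$ because $\varphi_j$ vanishes on a neighbourhood of such $x$. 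Neither affects correctness.
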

\begin{remark}
Note that Lemma \ref{lemma2} actually holds for every family $(\varphi_j)_{j\geq j_0}$ as above, which satisfies the weaker assumption $\displaystyle\sum_{j=j_0}^{\infty}\varphi_j\sim 1$. 
\end{remark}

\section{An extension operator for the spaces $V_{\beta,\delta}^{l,p}(K)$}

In this section we construct an extension operator for the space $V_{\beta,\delta}^{l,p}(K)$. Our main result reads as follows:

 \begin{theorem}(Extension operator)
\label{extop}

Let $K\subset \mathbb{R}^3 $ be a  polyhedral cone and let $l\in \mathbb{N}_0$, $ \beta\in \mathbb{R}$, $\delta\in\mathbb{R}^n $ and $1\leq p<\infty$. Then there exists a bounded linear extension operator $$\mathfrak{E}:V_{\beta,\delta}^{l,p}(K,S)\rightarrow V_{\beta,\delta}^{l,p}(\mathbb{R}^3,S),$$ where $S$ is the singularity set of $K$.
\end{theorem}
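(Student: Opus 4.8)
The plan is to follow the three-step program announced in the introduction. First, I would establish a local extension result for the model situation: a special Lipschitz domain (in particular, a bounded wedge) equipped with a single edge weight. Concretely, recall Stein's extension operator $\mathfrak{E}_0$ for a special Lipschitz domain $\Omega=\{x_d>\psi(x_1,\dots,x_{d-1})\}$, which has the explicit form $\mathfrak{E}_0u(x)=\int_1^\infty u(x',x_d-\lambda\Delta(x))\,\psi(\lambda)\,d\lambda$ where $\Delta(x)\sim\mathrm{dist}(x,\partial\Omega)$ is a regularized distance and $\psi$ a suitable kernel. The key observation is that for a wedge with edge along the $x_1$-axis the weight $r(x)=\mathrm{dist}(x,\text{edge})$ is, up to constants, comparable on both sides of the face and is \emph{not} affected by the reflection across the Lipschitz graph (the kernel moves the point only in a direction transverse to the edge by an amount $\lesssim\mathrm{dist}(x,\partial\Omega)\lesssim r(x)$, so $r$ changes by at most a fixed factor). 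Using the derivative bounds on $\Delta$ and the Hardy-type inequalities implicit in Stein's estimates, I would show $\mathfrak{E}_0:V_\delta^{l,p}(\Omega)\to V_\delta^{l,p}(\mathbb{R}^d)$ is bounded, where $V_\delta^{l,p}$ carries only the edge weight. This is essentially the generalization of \cite{Han15} from one weight measuring distance to a point to one measuring distance to a line, and the argument there should transfer with the distance function replaced accordingly.

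Second, I would treat a fixed dyadic layer of the cone bounded away from the vertex, say $K'=K\cap\{1/2<|x|<2\}$, and construct an extension $\mathfrak{E}':V_{\beta,\delta}^{l,p}(K')\to V_{\beta,\delta}^{l,p}(U)$ for a neighborhood $U$ of $\overline{K'}$. On such a layer $\rho_0(x)\sim 1$, so the norm reduces to a finite sum of edge-weighted norms $V_{\delta_k}^{l,p}$ localized near each edge $M_k$, plus an unweighted Sobolev norm away from all edges. Cover $\overline{K'}$ by finitely many charts: away from $\partial K$ use interior charts (no extension needed), near a smooth face point use Stein's plain extension, and near each edge $M_k$ use a diffeomorphism straightening the wedge onto the model wedge of Step 1 — here I must check that such a bi-Lipschitz, smooth diffeomorphism distorts $r_k$ only by bounded factors and preserves the weighted-Sobolev norm up to equivalence (this uses the pointwise-multiplier property \eqref{okt15d} and the chain rule together with the derivative bounds on the regularized distances). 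Gluing the local extensions with a smooth partition of unity subordinate to the cover, and invoking that cut-off functions are multipliers, yields $\mathfrak{E}'$ bounded with a constant depending only on the geometry of the layer, not on the particular layer.

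Third, I would globalize by the dyadic decomposition. Write $K=\bigcup_{j\ge j_0}K_j$ with $K_j=K\cap\{2^{-j-1}<|x|<2^{-j+1}\}$ and use the scaling $x\mapsto 2^jx$ to map $K_j$ onto a fixed layer $\widehat K\sim K'$; under this dilation the $V_{\beta,\delta}^{l,p}$-norm transforms by a power of $2^j$ that is the \emph{same} on $K_j$ and on its extension target (the weight $\rho_0$ scales homogeneously and the factors $r_k/\rho_0$ are scale-invariant), so Step 2 gives a family $\mathfrak{E}_j$ of extension operators, each mapping into an annular neighborhood of $\overline{K_j}$, with operator norm bounded uniformly in $j$. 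Now pick the resolution of unity $(\varphi_j)$ from \eqref{zeta} and set $\mathfrak{E}u:=\sum_{j\ge j_0}\mathfrak{E}_j(\varphi_ju)$, noting that $\varphi_ju$ is supported in $K_j$ and $\mathfrak{E}_j(\varphi_ju)$ is supported in a fixed-overlap neighborhood of $K_j$, so the sum is locally finite; moreover $\mathfrak{E}u=u$ on $K$ because $\sum\varphi_j=1$ there. Applying the localization Lemma \ref{lemma2} on $\mathbb{R}^3$ (its remark permits $\sum\varphi_j\sim 1$, which holds after accounting for the bounded overlap), we get
\[
\|\mathfrak{E}u\,|\,V_{\beta,\delta}^{l,p}(\mathbb{R}^3)\|^p\lesssim\sum_{j\ge j_0}\|\mathfrak{E}_j(\varphi_ju)\,|\,V_{\beta,\delta}^{l,p}(\mathbb{R}^3)\|^p\lesssim\sum_{j\ge j_0}\|\varphi_ju\,|\,V_{\beta,\delta}^{l,p}(K)\|^p\sim\|u\,|\,V_{\beta,\delta}^{l,p}(K)\|^p,
\]
where the middle inequality is the uniform bound from Steps 1–2 and the last equivalence is Lemma \ref{lemma2} again on $K$.

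The main obstacle I anticipate is Step 1 together with the chart-invariance check in Step 2: one must verify carefully that Stein's kernel, which shifts a point by a distance proportional to the regularized distance to the Lipschitz boundary, never distorts the edge-distance $r_k$ by more than a bounded factor — this is geometrically plausible because near an edge the boundary is itself "made of" the faces meeting at that edge, but it requires a quantitative comparison of $\mathrm{dist}(x,\partial\Omega)$ and $\mathrm{dist}(x,\text{edge})$ and control of how both behave under the straightening diffeomorphism. Once that pointwise weight-comparability is in hand, the weighted estimates reduce to Stein's original unweighted ones combined with the elementary Hardy inequalities, and the globalization is routine given Lemma \ref{lemma2}.
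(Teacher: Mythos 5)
Your three-step architecture coincides with the paper's (Stein's operator on a special Lipschitz domain carrying an edge weight, a covering-by-wedges argument for a fixed layer, dyadic rescaling plus gluing), but two steps would fail as written. In Step 1 the ``key observation'' that the kernel moves a point by an amount $\lesssim\mathrm{dist}(x,\partial\Omega)$, so that $r$ changes by at most a fixed factor, is not correct: Stein's operator averages $u(x',x_3+\lambda\delta^*(x))$ over all $\lambda\in[1,\infty)$, so the sample points can be arbitrarily far from $x$ and their edge-distance arbitrarily large compared with $r(x)$. What actually carries the weighted estimate is a Hardy inequality in the $x_3$-variable whose Muckenhoupt-type condition \eqref{conditionhardya} must be checked for the weights $v(x)=x\,r(x^0,\omega(x^0)-x)^{\gamma}$ and $w(x)=x^2\,r(x^0,\omega(x^0)+x)^{\gamma}$; the verification hinges on the hypothesis $\mathbb{R}^2_*\subset\partial D$, which forces $|\omega(x^0)|\leq M|x_2^0|$ and hence $r(x^0,\omega(x^0)\pm x)\lesssim\max(|x_2^0|,x)$ together with $r(x^0,\omega(x^0)\pm x)\geq|x_2^0|$. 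Moreover, for $\gamma=\delta-l<0$ the standard bound $|\psi(\lambda)|\leq A\lambda^{-2}$ is insufficient and one needs $|\psi(\lambda)|\leq A_\kappa\lambda^{-\kappa}$ with $\kappa$ so large that $(\kappa+\gamma)q>1$. You correctly flag this as the main obstacle, but the heuristic you offer in its place does not survive the computation.

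In Step 3 the formula $\mathfrak{E}u=\sum_j\mathfrak{E}_j(\varphi_j u)$ need not restrict to $u$ on $K$: the operator $\mathfrak{E}_j$ only guarantees $\mathfrak{E}_j(\varphi_j u)=\varphi_j u$ on $K_j$, while on the overlap regions $K\cap(K_{j\pm1}\setminus K_j)$ --- which lie inside the support of the extension --- $\mathfrak{E}_j(\varphi_j u)$ can be nonzero even though $\varphi_j u$ vanishes there, so the sum over $j$ does not telescope to $u$. The paper avoids this by setting $\mathfrak{E}u=\Phi^{-1}\sum_j\varphi_j\,\mathfrak{E}_j(\varphi_j u)$ with $\Phi=\sum_j\varphi_j^2$, so that each summand is re-localized to $E_j$ (where the extension does agree with $\varphi_j u$ on $K$) and the normalization restores $u$; the same device is already needed inside the layer lemma when gluing the wedge operators. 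With these two corrections, the uniform-boundedness computation via the dilations $T_ju=u(2^j\cdot)$ (whose norms $\sim 2^{-j(\beta-l+3/p)}$ cancel against those of $T_{-j}$) and the localization Lemma \ref{lemma2} go through exactly as you describe.
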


\begin{remark}
The norm of the space $V_{\beta,\delta}^{l,p}(\mathbb{R}^3,S) $ is defined similarly as the norm of $ V_{\beta,\delta}^{l,p}(K,S)$ replacing the integral domain $K$ by $ \mathbb{R}^3$.
\end{remark}

\subsection{Extension operator for special Lipschitz domains}

 In order to prove Theorem \ref{extop} we need some preparations. We follow the proof of the extension theorem of Stein, who originally proved the theorem for classical Sobolev spaces. This was generalized by Hansen in \cite{Han15} to the setting of Kondratiev spaces. Since we now deal with mixed weights in the context of the spaces $V_{\beta,\delta}^{l,p}(K)$, we need to make some careful modifications. We start with an extension theorem for spaces defined on special Lipschitz domains, which  is a counterpart of the corresponding theorem for Kondratiev spaces  \cite[p. 582, Appendix]{Han15}. For this we  need to deal with the spaces  $V_{\delta}^{l,p}(D,\mathbb{R}^2_{\ast})$ from  \cite[p. 24]{MR10}, where 
 \[
 \mathbb{R}_*^2:=\{ x\in \mathbb{R}^3:\: x_{2}=x_{3}=0  \}
 \]
 denotes the $x_1$-axis. 
 
 \begin{definition}
 Let $l \in \mathbb{N}_0,\: \delta\in \mathbb{R}$, and $1\leq p<\infty$. Furthermore, let $D\subset \mathbb{R}^3$ be a special Lipschitz domain, i.e.,
\begin{align}
\label{speclipdom}
D=\{   x\in \mathbb{R}^3: \: x=(x',x_3),\: x'\in \mathbb{R}^{2},\: x_3>\omega(x') \},
\end{align}
for some Lipschitz-continuous function $\omega:\: \mathbb{R}^{2}\rightarrow \mathbb{R}$.
Assume $\mathbb{R}_*^2\subset \partial D $ and  let $r(x)$ denote the distance of $x$ to $\mathbb{R}_*^2$. Then the space $V_{\delta}^{l,p}(D,\mathbb{R}^2_{\ast})$  is defined as the closure of 
$$C_{\ast}^{\infty}(D,\mathbb{R}_*^2)=\{u\big|_{D}: \ u\in C_0^{\infty}(\real^3\setminus \mathbb{R}_*^2)\}$$
with respect to the norm 
\begin{align*}
      \|u|V_{\delta}^{l,p}(D,\mathbb{R}_*^2) \|=\left(  
    \displaystyle\int_D \sum_{|\alpha|\leq l}r^{p(\delta-l+|\alpha|)} |\partial^{\alpha}u(x)|^p \: dx
    \right)^{1/p}.
\end{align*}
 \end{definition}

\begin{remark}
More general, let $D$ be a domain of polyhedral type where for the edge $M$ we have $M\subset \partial D$. Then the space $V_{\delta}^{l,p}(D,M)$ can be defined in a similar manner. In this case the function $r$ denotes the distance to the edge $M$. Furthermore, the norm of the space $ V_{\delta}^{l,p}(\mathbb{R}^3,\mathbb{R}_*^2)$ is the counterpart of the norm of $V_{\delta}^{l,p}(D,\mathbb{R}_*^2)$ replacing the integral domain $D$ by $\mathbb{R}^3$.
\end{remark}

Now we can state and proof an extension theorem for the above spaces defined on special Lipschitz domain.

 \begin{theorem}
\label{prop}
  Let $l \in \mathbb{N}_0,\: \delta\in \real$, and $1\leq  p<\infty$. Moreover, let $D\subset\mathbb{R}^3$ be a special Lipschitz domain and assume $\mathbb{R}_*^2\subset \partial D $. Then there exists a universal linear and bounded  extension operator 
  $$\mathfrak{E}: \ V_{\delta}^{l,p}(D,\mathbb{R}_*^2)\rightarrow V_{\delta}^{l,p}(\mathbb{R}^3,\mathbb{R}_*^2). $$
\end{theorem}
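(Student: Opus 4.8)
The plan is to follow Stein's construction of the extension operator for special Lipschitz domains and to verify that it remains bounded when the underlying Sobolev norm is replaced by the weighted norm of $V_{\delta}^{l,p}(D,\mathbb{R}_*^2)$. Recall that Stein defines, for $x=(x',x_3)$ with $x_3<\omega(x')$, the extension
\[
(\mathfrak{E}u)(x',x_3):=\int_1^\infty u\bigl(x',x_3+\lambda\psi(x)\bigr)\,\psi_*(\lambda)\,d\lambda,
\]
where $\psi(x)\sim \operatorname{dist}(x,D)$ is a regularized distance to $D$ (so $\psi(x)\geq c\,(\omega(x')-x_3)$ and $\psi$ has the usual Stein-type derivative bounds), and $\psi_*$ is a fixed kernel on $[1,\infty)$ with $\int_1^\infty\psi_*(\lambda)\,d\lambda=1$, $\int_1^\infty\lambda^k\psi_*(\lambda)\,d\lambda=0$ for $1\le k\le l$ (or for all $k$, with suitable decay), which guarantees that $\mathfrak{E}u$ together with its derivatives up to order $l$ matches $u$ across $\partial D$ and hence defines a function on all of $\mathbb{R}^3$ lying in the right local Sobolev class. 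The first step is therefore to record this definition, to recall that $\mathfrak{E}u\big|_D=u$ and that $\mathfrak{E}$ maps $C^\infty_*(D,\mathbb{R}_*^2)$ into functions that are smooth on $\mathbb{R}^3\setminus\mathbb{R}_*^2$ (this uses that the reflection $x_3\mapsto x_3+\lambda\psi(x)$ keeps us away from the $x_1$-axis, since on the lower half the axis is not approached), so that it suffices to prove the norm estimate $\|\mathfrak{E}u\,|V_{\delta}^{l,p}(\mathbb{R}^3,\mathbb{R}_*^2)\|\lesssim \|u\,|V_{\delta}^{l,p}(D,\mathbb{R}_*^2)\|$ on this dense class and then extend by continuity.

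The second and main step is the weighted norm estimate on $D^-:=\mathbb{R}^3\setminus\overline{D}$. Differentiating under the integral sign, $\partial^\alpha(\mathfrak{E}u)(x)$ is a finite sum of terms of the form $\int_1^\infty (\partial^\gamma u)\bigl(x',x_3+\lambda\psi(x)\bigr)\,\lambda^{m}P(\partial\psi,\ldots)(x)\,\psi_*(\lambda)\,d\lambda$ with $|\gamma|\le|\alpha|\le l$ and $P$ a polynomial in derivatives of $\psi$ of total order controlled so that the Stein bounds give $|P(x)|\lesssim \psi(x)^{|\gamma|-|\alpha|}$; here one also absorbs extra powers of $\lambda$ coming from differentiating the argument $\lambda\psi(x)$, which is harmless because of the strong decay of $\psi_*$. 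One then multiplies by the weight $r(x)^{\delta-l+|\alpha|}$ and integrates $|\,\cdot\,|^p$ over $D^-$. After the substitution $y=(x',x_3+\lambda\psi(x))$ (for fixed $x'$ and $\lambda$, or fixed $x$ and variable $\lambda$ — whichever Stein's original argument uses, typically a change of variables in $(x_3,\lambda)$) one arrives at an integral over $D$ of $|\partial^\gamma u(y)|^p$ against a kernel. The crucial point is to compare the weights: one needs that for $x\in D^-$ and $y=(x',x_3+\lambda\psi(x))\in D$ one has $r(y)\sim r(x')\sim$ (distance of $x'$ to the $x_1$-axis in the $x'$-plane, since $\mathbb{R}_*^2$ lies in $\partial D$) up to constants uniform in $\lambda\ge1$, because the reflection only changes the $x_3$-coordinate while $r(x)=\operatorname{dist}(x,\mathbb{R}_*^2)$ is comparable to the distance of $x'$ to the line in a way that is controlled since $\mathbb{R}_*^2\subset\partial D$ forces $\omega$ to vanish on the $x_1$-axis and be Lipschitz. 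Combined with $\psi(x)\sim\operatorname{dist}(x,D)$, this lets us dominate $r(x)^{\delta-l+|\alpha|}$ by $C\,r(y)^{\delta-l+|\gamma|}$ times a factor that is a power of $\psi(x)$ exactly matching the $\psi^{|\gamma|-|\alpha|}$ from $P$; the leftover is the genuine $L_p$-boundedness of the resulting convolution-type operator in the $x_3$/$\lambda$ variables, which is exactly Stein's original estimate (a one-dimensional Hardy/Minkowski-type inequality uniform in $x'$).

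The third step is routine bookkeeping: on $D$ itself the estimate is trivial since $\mathfrak{E}u=u$ there; summing over $|\alpha|\le l$ and over the finitely many derivative-decompositions yields the claimed bound. The linearity and universality (independence of $\omega$ beyond its Lipschitz constant) are visible from the explicit formula. \textbf{The main obstacle} I expect is precisely the weight comparison in step two — verifying that passing from $x\in D^-$ to the reflected point $y\in D$ distorts the edge-distance weight $r$ only by a uniform constant, uniformly in the integration parameter $\lambda\in[1,\infty)$, and simultaneously that the mismatch between the weight exponents $\delta-l+|\alpha|$ at $x$ and $\delta-l+|\gamma|$ at $y$ is absorbed exactly by the Stein derivative bounds on the regularized distance $\psi$; this is where the structural hypothesis $\mathbb{R}_*^2\subset\partial D$ (and the resulting geometry relating $\omega$ to the axis) is essential, and where the argument genuinely differs from the unweighted Stein theorem and from the single-weight Kondratiev case in \cite{Han15}.
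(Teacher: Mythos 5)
Your overall architecture (Stein's operator, density of $C^\infty_*(D,\mathbb{R}^2_*)$, a pointwise estimate, reduction to a one-dimensional inequality in $x_3$ for fixed $x'$) matches the paper's, but the step you yourself flag as the main obstacle is resolved incorrectly, and this is a genuine gap. You claim that for $x=(x',x_3)$ below the graph and the reflected point $y=(x',x_3+\lambda\psi(x))$ one has $r(y)\sim r(x')\sim|x_2|$ uniformly in $\lambda\ge 1$. This is false: since $\lambda$ ranges over all of $[1,\infty)$ and $\psi(x)\gtrsim\omega(x')-x_3>0$, the third coordinate of $y$ tends to $+\infty$ as $\lambda\to\infty$, so $r(y)=\bigl(x_2^2+(x_3+\lambda\psi(x))^2\bigr)^{1/2}$ is unbounded while $|x_2|$ is fixed; likewise $r(x)$ itself is not comparable to $|x_2|$ deep below the graph. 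No pointwise comparison of the weights at $x$ and at $y$ is available, so the problem does not reduce to Stein's unweighted one-dimensional estimate. The paper instead keeps the weight evaluated where each function actually lives and proves a genuinely two-weight Hardy inequality, with $v(t)=t\,r(x^0,\omega(x^0)-t)^{\gamma}$ on the extended side and $w(t)=t^2\,r(x^0,\omega(x^0)+t)^{\gamma}$ on the original side ($\gamma=\delta-l$), verifying the Muckenhoupt-type condition $B(v,w)<\infty$ uniformly in $x^0$ by a case distinction on $a$ versus $(2M+1)|x_2^0|$; this is precisely where the hypothesis $\mathbb{R}^2_*\subset\partial D$ (giving $|\omega(x^0)|\le M|x_2^0|$) enters. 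That computation cannot be bypassed by a pointwise weight comparison.

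A second, related flaw: for $|\gamma|<|\alpha|$ you propose to absorb the factor $\psi(x)^{|\gamma|-|\alpha|}$ coming from higher derivatives of the regularized distance by the mismatch between $r(x)^{\delta-l+|\alpha|}$ and $r(y)^{\delta-l+|\gamma|}$ in the weights. This conflates two unrelated quantities: $\psi\sim\mathrm{dist}(\cdot,\partial D)$ vanishes on all of $\partial D$, whereas $r=\mathrm{dist}(\cdot,\mathbb{R}^2_*)$ is of order one near smooth boundary points away from the axis, so a power of $r$ cannot compensate a negative power of $\psi$. The paper avoids this entirely by using the vanishing moments $\int_1^\infty\lambda^k\psi_*(\lambda)\,d\lambda=0$ together with a Taylor expansion of the lower-order derivative of $u$ (see the treatment of the term $IV$), which converts every such term into one carrying a derivative of $u$ of exactly order $|\alpha|$; only then do the weight exponents match on both sides. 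Finally, your sketch does not address $\delta-l<0$, which requires strengthening the kernel decay to $|\psi_*(\lambda)|\le A_\kappa\lambda^{-\kappa}$ with $\kappa$ large so that the Hardy condition $(\kappa+\gamma)q>1$ still holds.
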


\begin{remark}\hfill \\[0.2cm]
\label{remark3.6}
	\begin{minipage}{0.5\textwidth} 
	In particular, note that  Theorem \ref{prop} holds for wedges. In this case we  choose $\omega$ as the Lipschitz function
	$$ \omega(x_1,x_2)=|x_2|,$$
	and describe the wedge as a special Lipschitz domain according to  \eqref{speclipdom}, i.e.,
		\end{minipage}\hfill 
		\begin{minipage}{0.35\textwidth}
		\hfill \\
			\def\svgwidth{150pt}
\begingroup%
  \makeatletter%
  \providecommand\color[2][]{%
    \errmessage{(Inkscape) Color is used for the text in Inkscape, but the package 'color.sty' is not loaded}%
    \renewcommand\color[2][]{}%
  }%
  \providecommand\transparent[1]{%
    \errmessage{(Inkscape) Transparency is used (non-zero) for the text in Inkscape, but the package 'transparent.sty' is not loaded}%
    \renewcommand\transparent[1]{}%
  }%
  \providecommand\rotatebox[2]{#2}%
  \ifx\svgwidth\undefined%
    \setlength{\unitlength}{263.57658904bp}%
    \ifx\svgscale\undefined%
      \relax%
    \else%
      \setlength{\unitlength}{\unitlength * \real{\svgscale}}%
    \fi%
  \else%
    \setlength{\unitlength}{\svgwidth}%
  \fi%
  \global\let\svgwidth\undefined%
  \global\let\svgscale\undefined%
  \makeatother%
  \begin{picture}(1,0.51412955)%
    \put(0,0){\includegraphics[width=\unitlength,page=1]{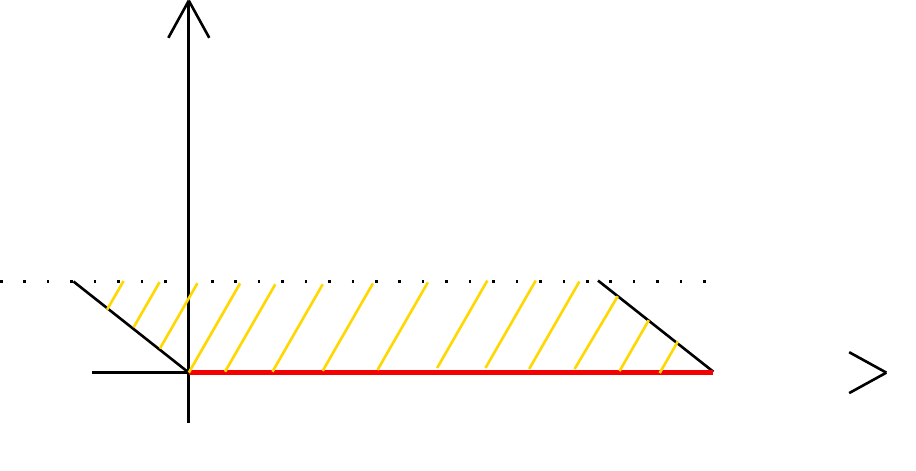}}%
    \put(0.08831072,0.45400875){\color[rgb]{0,0,0}\makebox(0,0)[lb]{\smash{$x_3$}}}%
    \put(0.78873145,0.32594205){\color[rgb]{0,0,0}\makebox(0,0)[lb]{\smash{$x_2$}}}%
    \put(0.87194632,0.02410914){\color[rgb]{0,0,0}\makebox(0,0)[lb]{\smash{$x_1$}}}%
    \put(0.41564223,0.00735636){\color[rgb]{1,0,0}\makebox(0,0)[lb]{\smash{$S$}}}%
    \put(0,0){\includegraphics[width=\unitlength,page=2]{speclipdom.pdf}}%
  \end{picture}%
\endgroup%

		\end{minipage} 
$$ W=\{  x\in \mathbb{R}^3: \: x=(x_1,x_2,x_3),\: x_3>\omega(x_1,x_2)   \}.  $$ 
	Thus, the weight in the $V_{\delta}^{l,p}(W,\mathbb{R}_*^2) $-norm  measures the distance of a point to  the $x_1$-axis,  which  represents the edge (and therefore the singularity set) of the wedge.
\end{remark}


\begin{proof}
For  the proof we rely on  calculations of Pieperbeck \cite{Piep} and  Hansen \cite{Han15}. 
Since by definition the set $C_{\ast}^{\infty}(D,\mathbb{R}_*^2)$ is dense in $V_{\delta}^{l,p}(D,\mathbb{R}_*^2) $,  it suffices to prove the theorem for this dense subset.

\textit{Step 1 (Preparations):} We set $\delta(\xi)=\mathrm{dist}(\xi,\partial D)$ as the distance of the point $\xi\in \overline{D}^c$ to the boundary of $D$. Its regularized version is denoted by $\Delta(\xi)$, cf. Stein \cite[p. 171, Theorem 2]{Stein}. In particular, it holds
\begin{align}
C_1 \delta(\xi)\leq \Delta(\xi)\leq C_2 \delta(\xi),
\quad \text{and}\qquad 
\label{aug11sok}
\left|  \partial_{\xi}^{\alpha} \Delta(\xi)      \right|\leq B_{\alpha}(\delta(\xi))^{1-|\alpha|},\quad \xi\in \overline{D}^c,
\end{align}
for constants $C_1$, $C_2$ and $B_{\alpha}$ independent of $D$.  
	We consider the point $\xi^0=(x^0,\omega(x^0))\in \partial D$ and denote by 
$$\Gamma_{\xi^0}=\{  \xi=(x',x_3):\  x_3<\omega(x^0),\ |x_3-\omega(x^0)|>M|x'-x^0|   \} $$
the lower cone with vertex at the point $\xi^0$, where $M$ is the Lipschitz constant  of $\omega$ and $x'\in\mathbb{R}^2,\: x_3\in\mathbb{R} $. Then we clearly have $\Gamma_{\xi^0}\cap \overline{D}=\{ \xi^0\}$. Moreover, the point $p=(x^0,y)$, $y<\omega(x^0)$, lies on the axis of the cone.
We make some elementary geometric calculations using the notation from the figure below. One can check easily that $\delta(p)\geq h$, and we see that
$q=\left(\frac{\omega(x^0)-y+M|x^0|}{M},y\right)\in\partial\Gamma_{\xi^0}$ according to the definition of $\Gamma_{\xi^0}$.

	\begin{minipage}{0.5\textwidth}\vspace{-5cm}
Furthermore, we have 
\begin{align*}
    a &=\omega(x^0)-y,\\
    b &= \frac{\omega(x^0)-y}{M},\\
    c &= \left( \left(  \frac{\omega(x^0)-y}{M}  \right)^2+(\omega(x^0)-y)^2    \right)^{1/2},
\end{align*}
and hence, 
$ 
    h=\frac{ab}{c}=\frac{\omega(x^0)-y}{(M^2+1)^{1/2}}.
$ 
	\end{minipage}\hfill 
		\begin{minipage}[b]{0.45\textwidth}
			\def\svgwidth{200pt}
\begingroup%
  \makeatletter%
  \providecommand\color[2][]{%
    \errmessage{(Inkscape) Color is used for the text in Inkscape, but the package 'color.sty' is not loaded}%
    \renewcommand\color[2][]{}%
  }%
  \providecommand\transparent[1]{%
    \errmessage{(Inkscape) Transparency is used (non-zero) for the text in Inkscape, but the package 'transparent.sty' is not loaded}%
    \renewcommand\transparent[1]{}%
  }%
  \providecommand\rotatebox[2]{#2}%
  \ifx\svgwidth\undefined%
    \setlength{\unitlength}{272.35209798bp}%
    \ifx\svgscale\undefined%
      \relax%
    \else%
      \setlength{\unitlength}{\unitlength * \real{\svgscale}}%
    \fi%
  \else%
    \setlength{\unitlength}{\svgwidth}%
  \fi%
  \global\let\svgwidth\undefined%
  \global\let\svgscale\undefined%
  \makeatother%
  \begin{picture}(1,0.8029077)%
    \put(0,0){\includegraphics[width=\unitlength,page=1]{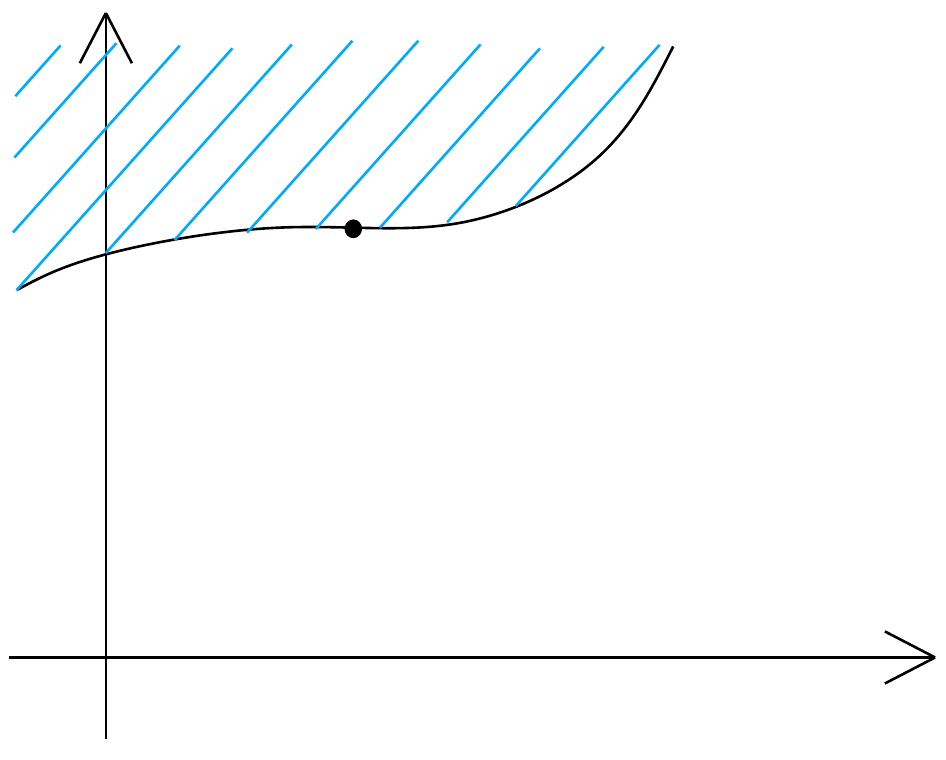}}%
    \put(0.89683816,0.02353733){\color[rgb]{0,0,0}\makebox(0,0)[lb]{\smash{$x'$}}}%
    \put(-0.00305678,0.77182719){\color[rgb]{0,0,0}\makebox(0,0)[lb]{\smash{$x_3$}}}%
    \put(0.27372698,0.66890245){\color[rgb]{0,0,0}\makebox(0,0)[lb]{\smash{$D$}}}%
    \put(0.18316411,0.34880077){\color[rgb]{0,0,0}\makebox(0,0)[lb]{\smash{$\Gamma_{\xi^0}$}}}%
    \put(0.58250087,0.37681917){\color[rgb]{0,0,0}\makebox(0,0)[lb]{\smash{}}}%
    \put(0.39708533,0.51493172){\color[rgb]{0,0,0}\makebox(0,0)[lb]{\smash{$\xi^0=(x^0,\omega(x^0))$}}}%
    \put(0.54527286,0.16652991){\color[rgb]{0,0,0}\makebox(0,0)[lb]{\smash{$q$}}}%
    \put(0.44658424,0.40449692){\color[rgb]{0,0,0}\makebox(0,0)[lb]{\smash{$c$}}}%
    \put(0,0){\includegraphics[width=\unitlength,page=2]{gammaxiprime.pdf}}%
    \put(0.42693882,0.13119067){\color[rgb]{0,0,0}\makebox(0,0)[lb]{\smash{$b$}}}%
    \put(0.31251589,0.167856){\color[rgb]{0,0,0}\makebox(0,0)[lb]{\smash{$p$}}}%
    \put(0.32765632,0.31257107){\color[rgb]{0,0,0}\makebox(0,0)[lb]{\smash{$a$}}}%
    \put(0.40830298,0.21283063){\color[rgb]{0,0,0}\makebox(0,0)[lb]{\smash{$h$}}}%
    \put(0,0){\includegraphics[width=\unitlength,page=3]{gammaxiprime.pdf}}%
    \put(0.35370209,0.0499641){\color[rgb]{0,0,0}\makebox(0,0)[lb]{\smash{$x^0$}}}%
    \put(0,0){\includegraphics[width=\unitlength,page=4]{gammaxiprime.pdf}}%
    \put(0.04701466,0.16610206){\color[rgb]{0,0,0}\makebox(0,0)[lb]{\smash{$y$}}}%
  \end{picture}%
\endgroup%

		\end{minipage} 


From these observations we obtain 
$ 
 \delta(x',x_3 )\geq  \frac{\omega(x')-x_3}{(1+M^2)^{1/2}},
$ 
for arbitrary $(x',x_3)\in \overline{D}^c$, which implies 
$$\omega(x')-x_3\leq C_3 \Delta(\xi).$$
Now we put $\delta^*(\xi)=2C_3\Delta(\xi)$ and obtain the estimate
$\delta^*(\xi)\geq 2(\omega(x')-x_3). $ 
From the definition of $\delta$ and since $D$ is a  special Lipschitz domain  we obtain
\begin{align*}
\delta(x',x_3)\leq d\left( (x',x_3), (x',\omega(x'))  \right)=|x_3-\omega(x')|=\omega(x')-x_3
\end{align*}
for all points $(x',x_3)\in \overline{D}^c$.
It further follows for all $\lambda>1$ that 
\begin{align}
\label{plusjel}
    x_3+\lambda \delta^*(x',x_3)\geq x_3+\delta^*(x',x_3)\geq x_3+2(\omega(x')-x_3)=2\omega(x')-x_3.
\end{align}
Finally, we also have
\begin{align}
    \label{bunkojel}
    \delta^*(x',x_3)=2C_3\Delta(x',x_3)\leq 2C_3C_2\delta(x',x_3)\leq 2C_3C_2(\omega(x')-x_3) .
\end{align}

\textit{Step 2 (Stein's extension operator):}
Stein defined the operator $\mathfrak{E}$ for a special Lipschitz domain $D$ and a function $u=g|_{D}$ where $g\in C_0^{\infty}(\mathbb{R}^d)$ 
by
\begin{align}
\label{steinextop}
    \mathfrak{E}u(x',x_3)=\displaystyle \int_{1}^{\infty} u(x',x_3+\lambda\delta^*(x',x_3))\psi(\lambda)\: d\lambda,\quad x'\in \mathbb{R}^{2},\: x_3\in \mathbb{R}, \: x_3<\omega(x'),
\end{align}
where $\psi:\: [1,\infty) \rightarrow \mathbb{R}$ is a rapidly decaying smooth function with $\displaystyle\int_{1}^{\infty}\psi(\lambda)\: d\lambda=1$ and $\displaystyle\int_{1}^{\infty}\lambda^k\psi(\lambda)\: d\lambda=0$ for all $k\in \mathbb{N}$. Such a function $\psi$ exists indeed according to \cite[p. 182, Chapter VI, Lemma 1]{Stein}. 
Now we fix a point $(x^0,\omega(x^0))\in \partial D.$ The properties of the function $\psi$ imply that $|\psi(\lambda)|\leq A\lambda^{-2}$ for some constant $A$. Using this and the previous estimates for $\delta^*$ we obtain for $x_3<\omega(x^0)$,
\begin{align}
\label{aug11}
|\mathfrak{E}u(x^0,x_3)|&\leq A \int_{1}^{\infty}|u(x^0,x_3+\lambda \delta^*(x^0,x_3))   | \frac{d\lambda}{\lambda^2}\notag\\
&=A\int_{x_3+ \delta^*(x^0,x_3)}^{\infty} |u(x^0,s)|\frac{\delta^*(x^0,x_3)^{-1}\: ds}{\delta^*(x^0,x_3)^{-2}(s-x_3)^2}\notag \\
&	\leq A\delta^*(x^0,x_3) \int_{x_3+ \delta^*(x^0,x_3)}^{\infty} |u(x^0,s)|(s-x_3)^{-2} \:ds\notag\\
&\leq A\delta^*(x^0,x_3) \int_{2\omega(x^0)-x_3}^{\infty} |u(x^0,s)|(s-x_3)^{-2}\: ds\notag\\
&\lesssim (\omega(x^0)-x_3)\int_{2\omega(x^0)-x_3}^{\infty} |u(x^0,s)|(s-x_3)^{-2} \:ds,
\end{align}
where we used the integral substitution $s:= x_3+\lambda\delta^*(x^0,x_3)$ with $ds=\delta^* \: d\lambda$ as well as formulas \eqref{plusjel} and \eqref{bunkojel} in the last two lines.\\
This pointwise estimate is now the basis for proving the relevant estimates in weighted Sobolev spaces.\\

\textit{Step 3 (Hardy's inequality and an estimate for $\alpha=0$):} The weight function of the space $V_{\delta}^{l,p}(D,\mathbb{R}_*^2) $ is given by
\begin{align}
\label{weightcone11}
 \rho(x):=r(x)^{\delta-l+|\alpha|},
\end{align}
where $r(x)$ denotes the distance of a point $x$ to $\mathbb{R}_*^2$ (i.e. the $x_1$-axis), thus $r(x)=x_2^2+x_3^2$. In this step we focus on the case $\alpha=0$, and also additionally assume $\delta\geq l$.

Now we multiply both sides of \eqref{aug11} with $\rho(x^0,x_3)$, 
take the $p$-th power on both sides and afterwards integrate w.r.t. $ x_3<\omega(x^0) $. This leads to 
\begin{align*}
&\displaystyle\int_{-\infty}^{\omega(x^0)}
\rho(x^0,x_3)^{p} |\mathfrak{E}u(x^0,x_3)|^p\: dx_3\\ &\lesssim \int_{-\infty}^{\omega(x^0)} (\omega(x^0)-x_3)^p \rho(x^0,x_3)^{p}
\left(  
\int_{2\omega(x^0)-x_3}^{\infty}|u(x^0,s)|(s-x_3)^{-2}\: ds
\right)^p \: dx_3.
\end{align*}
We make an integral substitution $ \widetilde{x_3}=\omega(x^0)-x_3 $ with $d\widetilde{x_3}=-dx_3$ and $\widetilde{s}=s-\omega(x^0)$ with $d\widetilde{s}=ds$. Then we obtain
\begin{align}
\label{aug5ca}
&\int_{-\infty}^{\omega(x^0)} (\omega(x^0)-x_3)^p \rho(x^0,x_3)^{p}
\left(  
\int_{2\omega(x^0)-x_3}^{\infty}|u(x^0,s)|(s-x_3)^{-2}\: ds
\right)^p \: dx_3\notag\\ &= \int_{0}^{\infty} \widetilde{x_3}^p \rho(x^0,\omega(x^0)-\widetilde{x_3})^p  \left(
\int_{\omega(x^0)+\widetilde{x_3}}^{\infty}|u(x^0,s)| (s-\omega(x^0)+\widetilde{x_3})^{-2}  \: ds
\right)^p \: d\widetilde{x_3}\notag\\
& \leq \int_{0}^{\infty} \widetilde{x_3}^p \rho(x^0,\omega(x^0)-\widetilde{x_3})^p  \left(
\int_{\widetilde{x_3}}^{\infty}
|u(x^0,\widetilde{s}+\omega(x^0))| \widetilde{s}^{-2 }\: d\widetilde{s}
\right)^p \: d\widetilde{x_3}.
\end{align}
We now intend to apply the following version of Hardy's inequality from \cite[p. 21]{Piep}:
Let $v,w:\mathbb{R}\rightarrow\mathbb{R}$ be measurable functions, $f\in L_p^+(\mathbb{R})=\{ g\in L_p(\mathbb{R}): \: g \: \text{is non-negative}   \}$, $1\leq p<\infty$ and $\frac{1}{p}+\frac{1}{q}=1$. Then there exists some finite constant $C(v,w)$ such that
\begin{align}
\label{hardya}
\left(\int_{0}^{\infty}\left|v(x)\int_{x}^{\infty}f(t)\: dt\right|^p\: dx   \right)^{1/p}\leq C(v,w) \left(   \int_{0}^{\infty} |w(x)f(x)|^p\: dx \right)^{1/p},
\end{align}
if it holds that
\begin{align}
\label{conditionhardya}
B(v,w):=\displaystyle\sup_{a>0} \left(\int_{0}^{a} |v(x)|^p \: dx   \right)^{1/p} \left(\int_{a}^{\infty} |w(x)|^{-q}\: dx  \right)^{1/q}<\infty.
\end{align}

We thus need to check the condition \eqref{conditionhardya}, where we choose the functions $v$ and $w$ as
\begin{align*}
v(x) & =x r(x^0,\omega(x^0)-x)^{\gamma},\\
w(x) & =x^2 r(x^0,x+\omega(x^0))^{\gamma},
\end{align*}
for some parameter $\gamma>0$ specified below.
The assumption $\real^2_*\subset\partial D$ entails $\omega(x_1,0)=0$ for all $x_1\in\real$, hence we find
\begin{align*}
    |\omega(x^0)|=|\omega(x_1^0,x_2^0)-\omega(x_1^0,0)|\leq M |(x_1^0,x_2^0)-(x_1^0,0)|=M|x_2^0|,
\end{align*}
which implies
\begin{align}
    \label{nov6}
  r(x^0,\omega(x^0)\pm x)^2 & \cong |x^0_2|^2+|\omega(x^0)\pm x  |^2  
   \leq |x^0_2|^2+ ( M|x_2^0|+x )^2
   \lesssim \max(|x_2^0|,x)^2.
\end{align}
Furthermore, it holds
\begin{align}
    \label{nov6b}
    |x_2^0|^2 \leq  r(x^0,\omega(x^0)\pm x)^2.
\end{align}

Using \eqref{nov6} we estimate the integral
\begin{align}
\label{estimua}
\int_{0}^{a} |v(x)|^p \: dx &=\int_{0}^{a} x^p r(x^0,\omega(x^0)-x)^{\gamma p} \: dx\notag\\
&\lesssim \max(|x_2^0|,a)^{\gamma p} \int_{0}^{a} x^p \: dx
\lesssim  \max(|x_2^0|,a)^{\gamma p} a^{p+1}.
\end{align}
Now we treat the second integral in \eqref{conditionhardya}. We consider two cases in order to obtain the needed estimate. \\
\textit{Case 1:} For $a<(2M+1)|x^0_2|$ we conclude with \eqref{nov6b}, that 
\begin{align*}
   \int_{a}^{\infty} |w(x)|^{-q} \: dx & = \int_{a}^{\infty} x^{-2q}r(x^0,x+\omega(x^0))^{-\gamma q} \: dx
    \lesssim |x_2^0|^{-\gamma q} a^{-2q+1}.
\end{align*}
\textit{Case 2:} For $a\geq (2M+1)|x^0_2|$ we have
$\  
    x\geq a \geq 2M|x^0_2| \geq 2 |\omega(x^0)|,\ 
$ 
from which we conclude
\begin{align*}
    r(x^0,\omega(x^0)+x)\geq |\omega(x^0)+x| \geq |x-|\omega(x^0)||\geq \frac{x}{2},
\end{align*}
and hence, 
\begin{align*}
     \int_{a}^{\infty} |w(x)|^{-q} \: dx &  \leq \int_{a}^{\infty} x^{-2q} \left( \frac{x}{2}   \right)^{-\gamma q} \: dx 
      \lesssim \int_{a}^{\infty} x^{-(2+\gamma)q}  \: dx
      \lesssim a^{-(2+\gamma)q+1}. 
\end{align*}
From the two cases above we now  see that 
\begin{align}
    \label{nov17}
      \int_{a}^{\infty} |w(x)|^{-q} \: dx  \lesssim a^{-2q+1}\max(a,|x_2^0|)^{-\gamma q}.
\end{align}
Now we estimate $B(v,w)$ using \eqref{estimua}, \eqref{nov17} and the fact that $\frac{1}{p}+\frac{1}{q}=1$. This gives
\begin{align*}
B(v,w)&\lesssim \displaystyle\sup_{a>0} \left(\max(|x_2^0|,a)^{\gamma} a^{\frac{p+1}{p}} \max(|x_2^0|,a)^{-\gamma} a^{\frac{-2q+1}{q}} \right)
=\sup_{a>0} 1=1<\infty,
\end{align*}
independent of $x^0$, and therefore,  condition \eqref{conditionhardya} is satisfied for our choice of functions $v$ and $w$. Applying now Hardy's inequality \eqref{hardya} with $f(x)=x^{-2}|u(x^0,\omega(x^0)+x)|$ and weights $v$ and $w$ for $\gamma=\delta-l>0$ to \eqref{aug5ca} we obtain

\begin{align*}
&\int_{0}^{\infty} \widetilde{x_3}^p \rho(x^0,\omega(x^0)-\widetilde{x_3})^p  \left(
\int_{\widetilde{x_3}}^{\infty}
|u(x^0,\widetilde{s}+\omega(x^0))| \widetilde{s}^{-2 }\: d\widetilde{s}
\right)^p \: d\widetilde{x_3}\\
&\lesssim \int_{0}^{\infty} \rho(x^0,\widetilde{x_3}+\omega(x^0))^{p}
|u(x^0,\widetilde{x_3}+\omega(x^0))|^p  \: d\widetilde{x_3}
= \int_{\omega(x^0)}^{\infty} \rho(x^0,x_3)^p |u(x^0,x_3)|^p \: dx_3.
\end{align*}
In conclusion, we obtain the inequality
\begin{align}
\label{aug5a}
\displaystyle\int_{-\infty}^{\omega(x^0)}
\rho(x^0,x_3)^{p} |\mathfrak{E}u(x^0,x_3)|^p\: dx_3
\lesssim  \int_{\omega(x^0)}^{\infty} \rho(x^0,x_3)^p |u(x^0,x_3)|^p \: dx_3.
\end{align}
Since $\mathfrak{E}u=u$ for $x_3>\omega(x^0)$ we trivially have
\begin{align}
\label{aug5ba}
\displaystyle\int_{\omega(x^0)}^{\infty}
\rho(x^0,x_3)^{p} |\mathfrak{E}u(x^0,x_3)|^p\: dx_3=\int_{\omega(x^0)}^{\infty}
\rho(x^0,x_3)^{p} |u(x^0,x_3)|^p\: dx_3.
\end{align}
Summing \eqref{aug5a} and \eqref{aug5ba} and integrating w.r.t. $x^0\in \mathbb{R}^{2}$ we obtain
\begin{align}
\int_{\mathbb{R}^{2}}\int_{\mathbb{R}} \rho(x^0,x_3) |\mathfrak{E}u(x^0,x_3)|^p \:dx_3\: dx^0
\lesssim \int_{\mathbb{R}^{2}} \int_{\omega(x^0)}^{\infty}\rho(x^0,x_3) |u(x^0,x_3)|^p \:dx_3\: dx^0. \label{aug5cba}
\end{align}
Taking the $1/p$-th power on both sides and applying Fubini's theorem yields
\begin{align*}
\|\rho\,\mathfrak{E}u|L_p(\real^3)\| \lesssim \|\rho u|L_p(\real^3)\|, 
\end{align*}
which takes care of the term with $\alpha=0$ in the norm-estimate for $\mathfrak{E}u$.\\

{\em Step 4 (The case $\alpha\neq 0$): } 
Next we need to investigate the partial derivatives of $\mathfrak{E}u$, i.e. we consider now $\partial^{\alpha}(\mathfrak{E}u)$, where $0<|\alpha|\leq l:$ Since $u$ is a test function of $C_0^{\infty}(\real^3\setminus \mathbb{R}_*^2)$ and, according to its definition, $\psi$ is a rapidly decaying function (i.e. decays faster than every polynomial), applying the Dominated Convergence Theorem we obtain that all of the derivatives of $u(\cdot)\psi(\cdot)$ are bounded, and hence we can change the order of integration and derivation as follows:
\begin{align*}
\partial^{\alpha}(\mathfrak{E}u)(x^0,x_3)& =\partial^{\alpha}\left(\displaystyle \int_{1}^{\infty} u(x^0,x_3+\lambda\delta^*(x^0,x_3))\psi(\lambda)\: d\lambda\right)\\
&=\int_{1}^{\infty}\psi(\lambda)\partial^{\alpha}\bigl[u(x^0,x_3+\lambda\delta^*(x^0,x_3))\bigr]\: d\lambda.
\end{align*}
We consider the first-order derivatives of $\mathfrak{E}u$ using the chain rule. This yields 

\begin{align}
\label{aug11_1}
&\frac{\partial(\mathfrak{E}u)}{\partial  x_i} (x^0,x_3)  \notag\\
&\quad =\displaystyle \int_{1}^{\infty} \psi(\lambda)\partial_{x_i}\bigl[ u(x^0,x_3+\lambda \delta^*(x^0,x_3))   \bigr] \:d\lambda\notag\\
&\quad =\displaystyle \int_{1}^{\infty} \psi(\lambda) 
\Bigl(\lambda\delta_{x_i}^*(x^0,x_3) u_{x_3}(x^0,x_3+\lambda\delta^*(x^0,x_3))+u_{x_i}(x^0,x_3+\lambda\delta^*(x^0,x_3)     )\Bigr)
 \:d\lambda
\end{align}
for $1\leq i\leq 2$ and

\begin{align}
\label{aug11_2}
\frac{\partial(\mathfrak{E}u)}{\partial x_3} (x^0,x_3)  &=\displaystyle \int_{1}^{\infty} \psi(\lambda)\partial_{x_3}\bigl[ u(x^0,x_3+\lambda \delta^*(x^0,x_3))   \bigr] \:d\lambda\notag\\
&= \displaystyle \int_{1}^{\infty} \psi(\lambda) 
\bigl(\lambda\delta_{x_3}^*(x^0,x_3)+1\bigr) u_{x_3}(x^0,x_3+\lambda\delta^*(x^0,x_3))
\:d\lambda.
\end{align}
Clearly higher-order derivative can be calculated similar to \eqref{aug11_1} and \eqref{aug11_2}. Exemplary, we find, once again using the chain rule,

\begin{align}
\label{aug11megtobb}
\frac{\partial^2(\mathfrak{E}u)}{\partial x_3^2} (x^0,x_3)&=\frac{\partial}{\partial x_3}\left(\frac{\partial(\mathfrak{E}u)}{\partial x_3} (x^0,x_3)   \right)\notag\\
&= \frac{\partial}{\partial x_3} \left( \displaystyle \int_{1}^{\infty} \psi(\lambda) 
((\lambda\delta_{x_3}^*(x^0,x_3)+1) u_{x_3}(x^0,x_3+\lambda\delta^*(x^0,x_3))     )
\:d\lambda    \right)\notag\\
&=\displaystyle \int_{1}^{\infty} \psi(\lambda) 
\frac{\partial}{\partial x_3}\bigg((\lambda\delta_{x_3}^*(x^0,x_3)+1) u_{x_3}(x^0,x_3+\lambda\delta^*(x^0,x_3))     \bigg)
\:d\lambda\notag\\
&=\displaystyle \int_{1}^{\infty} \psi(\lambda) 
((\lambda\delta_{x_3}^*(x^0,x_3)+1)^2 u_{x_3^2} (x^0,x_3+\lambda\delta^*(x^0,x_3)) \notag\\&\qquad +\lambda\delta_{x_3^2}^*(x^0,x_3)u_{x_3}     (x^0,x_3+\lambda\delta^*(x^0,x_3))         )\: d\lambda\notag\\
&= \underbrace{\displaystyle \int_{1}^{\infty} \psi(\lambda) \lambda^2 
\delta_{x_3}^*(x^0,x_3)^2u_{x_3^2}(x^0,x_3+\lambda\delta^*(x^0,x_3))\:d\lambda}_{=:I}\notag\\
&\qquad + \underbrace{ 
2\int_{1}^{\infty} \psi(\lambda)  \lambda \delta_{x_3}^*(x^0,x_3)u_{x_3^2}(x^0,x_3+\lambda\delta^*(x^0,x_3))\:d\lambda}_{=:II}
         \notag\\
&\qquad + \underbrace{   
\displaystyle \int_{1}^{\infty} \psi(\lambda) u_{x_3^2}(x^0,x_3+\lambda\delta^*(x^0,x_3))\:d\lambda}_{=:III}
       \notag\\
&\qquad +\underbrace{      \displaystyle \int_{1}^{\infty}\psi(\lambda)\lambda  \delta_{x_3^2}^*(x^0,x_3)  u_{x_3}(x^0,x_3+\lambda\delta^*(x^0,x_3))\:d\lambda }_{=:IV}.
\end{align}
Derivatives of higher order just result in a higher number of terms of similar form. To derive norm-estimates, every term will be treated separately, using the moment conditions for $\psi$ together with suitable Taylor expansions. This will be illustrated again by discussing  the case
$\frac{\partial^2(\mathfrak{E}u)}{\partial x_3^2}$ in detail. Thus we need to derive estimates for the integrals I--IV in \eqref{aug11megtobb}. We first recall that according to \eqref{aug11sok} we have 
\begin{align}
\label{aug}
\partial^{\alpha}\delta^*\leq c_{\alpha}(\delta^*)^{1-|\alpha|}
\end{align}
and
$  |\psi(\lambda)|\leq A_k\lambda^{-k} \quad \text{for any}\quad k\in \mathbb{N}_0.$ 
We consider $ I $ for $x_3<\omega(x_0)$ and obtain with  a similar argument as in the case $\alpha=0$ [cf. \eqref{aug11}]
\begin{align*}
|I|&\leq \displaystyle \int_{1}^{\infty} A_4\lambda^{-4} \lambda^2 c_{x_3}^2| u_{x_3^2}(x^0,x_3+\lambda\delta^*(x^0,x_3))  |  \: d\lambda \\
& \lesssim  A_4c_{x_3}^2 (\omega(x^0)-x_3)  \int_{2\omega(x^0)-x_3}^{\infty}  |u_{x_3^2}(x^0,s)   | (s-x_3)^{-2}\: ds.
\end{align*}
For $II$ we similarly find
\begin{align*}
|II|& \leq
2A_3 \int_{1}^{\infty} \lambda^{-3} \lambda c_{x_3} | u_{x_3^2}(x^0,x_3+\lambda\delta^*(x^0,x_3))  |
 \: d\lambda \\
& \lesssim  A_3c_{x_3} (\omega(x^0)-x_3)  \int_{2\omega(x^0)-x_3}^{\infty}  |u_{x_3^2}(x^0,s)   | (s-x_3)^{-2}\: ds.
\end{align*}
Moreover, concerning $III$ we get [cf. \eqref{aug11}], 
\begin{align*}
|III|& \leq
A_2 \int_{1}^{\infty} \lambda^{-2} | u_{x_3^2}(x^0,x_3+\lambda\delta^*(x^0,x_3))  |
\: d\lambda \\
& \lesssim  A_2 (\omega(x^0)-x_3)  \int_{2\omega(x^0)-x_3}^{\infty}  |u_{x_3^2}(x^0,s)   | (s-x_3)^{-2}\: ds.
\end{align*}
The last term $IV$ needs a bit more care, because it contains the term $\delta_{x_3^2}^*$ and for that the estimate \eqref{aug} alone is not enough. Instead we use a Taylor expansion to rewrite $u_{x_3}$ as
\begin{align*}
u_{x_3}(x^0,x_3+\lambda\delta^*(x^0,x_3))=u_{x_3}(x^0,x_3+\delta^*(x^0,x_3))+ \displaystyle \int_{x_3+\delta^*(x^0,x_3)}^{x_3+\lambda\delta^*(x^0,x_3)} 
u_{x_3^2}(x^0,t)\: dt.
\end{align*}
Due to the properties of $\psi$, in particular $ \displaystyle \int_{1}^{\infty}\lambda \psi(\lambda) \: d\lambda=0$, we calculate for the term $IV$, 
\begin{align*}
&\displaystyle \int_{1}^{\infty}\psi(\lambda)\lambda  \delta_{x_3^2}^*(x^0,x_3)  u_{x_3}(x^0,x_3+\lambda\delta^*(x^0,x_3))\:d\lambda \\
& =\displaystyle \int_{1}^{\infty}\psi(\lambda)\lambda  \delta_{x_3^2}^*(x^0,x_3)  u_{x_3}(x^0,x_3+\delta^*(x^0,x_3))\:d\lambda \\
& \qquad + \displaystyle \int_{1}^{\infty}\psi(\lambda)\lambda  \delta_{x_3^2}^*(x^0,x_3)  \int_{x_3+\delta^*(x^0,x_3)}^{x_3+\lambda\delta^*(x^0,x_3)} 
u_{x_3^2}(x^0,t)\: dt\: d\lambda\\
& =\delta_{x_3^2}^*(x^0,x_3)  u_{x_3}(x^0,x_3+\delta^*(x^0,x_3))\displaystyle \int_{1}^{\infty}\psi(\lambda)\lambda  \:d\lambda \\
& \qquad + \displaystyle \int_{1}^{\infty}\psi(\lambda)\lambda  \delta_{x_3^2}^*(x^0,x_3)  \int_{x_3+\delta^*(x^0,x_3)}^{x_3+\lambda\delta^*(x^0,x_3)} 
u_{x_3^2}(x^0,t)\: dt\: d\lambda\\
&=\int_{1}^{\infty}\psi(\lambda)\lambda  \delta_{x_3^2}^*(x^0,x_3)  \int_{x_3+\delta^*(x^0,x_3)}^{x_3+\lambda\delta^*(x^0,x_3)} 
u_{x_3^2}(x^0,t)\: dt\: d\lambda.
\end{align*}
This leads to the following estimate
\begin{align*}
|IV|&\lesssim (\delta^*(x^0,x_3))^{-1}A_4  \displaystyle \int_{1}^{\infty}
\left(\int_{x_3+\delta^*(x^0,x_3)}^{x_3+\lambda\delta^*(x^0,x_3)} 
|u_{x_3^2}(x^0,t)|\: dt     \right)\lambda^{-3} \: d\lambda\\
&=(\delta^*(x^0,x_3))^{-1}A_4 \int_{x_3+\delta^*(x^0,x_3)}^{\infty} \left|
\left(  
\int_{\frac{t-x_3}{\delta^*(x^0,x_3)}}^{\infty}\lambda^{-3} \: d\lambda
  \right)u_{x_3^2}(x^0,t)
\right|\: dt\\
&\sim (\delta^*(x^0,x_3))^{-1} \int_{x_3+\delta^*(x^0,x_3)}^{\infty}(\delta^*(x^0,x_3))^2
| u_{x_3^2}(x^0,t)  | \frac{dt}{(t-x_3)^2}\\
&\lesssim (\omega(x^0)-x_3)  \int_{2\omega(x^0)-x_3}^{\infty}  |u_{x_3^2}(x^0,s)   | (s-x_3)^{-2}\: ds\,.
\end{align*}
Altogether,  the integral estimates for $I$-$IV$ imply 
$$\left|\frac{\partial^2(\mathfrak{E}u)}{\partial x_3^2} (x^0,x_3)\right|\lesssim (\omega(x^0)-x_3)  \int_{2\omega(x^0)-x_3}^{\infty}  |u_{x_3^2}(x^0,s)   | (s-x_3)^{-2}\: ds, $$
which is the counterpart of the pointwise estimate \eqref{aug11}. From there on, the estimate
$$\Bigl\|\rho\,\frac{\partial^2(\mathfrak{E}u)}{\partial x_3^2}\Big|L_p(\real^3)\Bigr\|
    \lesssim\Bigl\|\rho\,\frac{\partial^2 u}{\partial x_3^2}\Big|L_p(\real^3)\Bigr\|$$
follows mutatis mutandis as in Step 3, simply by replacing $u$ by $u_{x_3^2}$ and with $\rho(x)=r(x)^{\delta-l+2}$. Since the higher order partial derivatives of $(\mathfrak{E}u)$ can be treated similarly as explained above, the theorem is proved for $\delta\geq l$.\\
{\em Step 5 (The case $\delta< 0$): } We  modify the estimate \eqref{aug11} in order to allow negative $\delta$ as follows:  According to the conditions of the function $\psi$ for  Stein's extension operator  we can replace $|\psi(\lambda)|\leq A\lambda^{-2}$ by $|\psi(\lambda)|\leq A_{\kappa}\lambda^{-\kappa}$, where   $\kappa\in \mathbb{N}$  can be chosen sufficiently large. This implies an analogous estimate for $\mathfrak{E}u(x^0,x_3)$ as in \eqref{aug11}, in particular,  as a substitute we obtain 
\begin{align}
|\mathfrak{E}u(x^0,x_3)|
&\lesssim (\omega(x^0)-x_3)^{\kappa-1}\int_{2\omega(x^0)-x_3}^{\infty} |u(x^0,s)|(s-x_3)^{-\kappa} \:ds. 
\end{align}
Then, when applying Hardy's inequality \eqref{hardya}, we now  choose the  functions 
$$v(x) =x^{\kappa-1} r(x^0,\omega(x^0)-x)^{\gamma}
\qquad \text{and}\qquad w(x) =x^{\kappa} r(x^0,x+\omega(x^0))^{\gamma},$$
where now $\gamma=\delta-l<0$. Similar calculations as in Step 3 above yield  that the  condition \eqref{conditionhardya} of  Hardy's inequality  holds if $(\kappa+\gamma)q>1$. Therefore, we can take any negative $\gamma<0$  if  $\kappa$ is  sufficiently large, which results in  the estimate \eqref{aug5cba}  with  $\rho(x)=r(x)^{\delta-l}$ and $\delta<0$ and proves the case $\alpha=0$. Concerning $\alpha\neq 0$, the arguments have to be adapted according to Step 4.  

\end{proof}


\subsection{Extension operator for a layer of the cone}

\begin{figure}[!ht] 
\begin{minipage}{0.65\textwidth}              
We now consider a fixed layer  $K'\subset K$ of the cone, i.e.,
 \begin{align}
 \label{nov19}
   K'=\{ x\in K: \: C_1<|x|<C_2  \}  
 \end{align}
for some constants $0<C_1<C_2<\infty$ and give an extension for functions belonging to $V_{\beta,\delta}^{l,p}$ from the layer $K'$  to $ \mathbb{R}^3$. In the proof we use the extension operator for special Lipschitz domains constructed in  Theorem \ref{prop}.
\end{minipage}\hfill \begin{minipage}{0.25\textwidth}       
    \def\svgwidth{100pt}    
    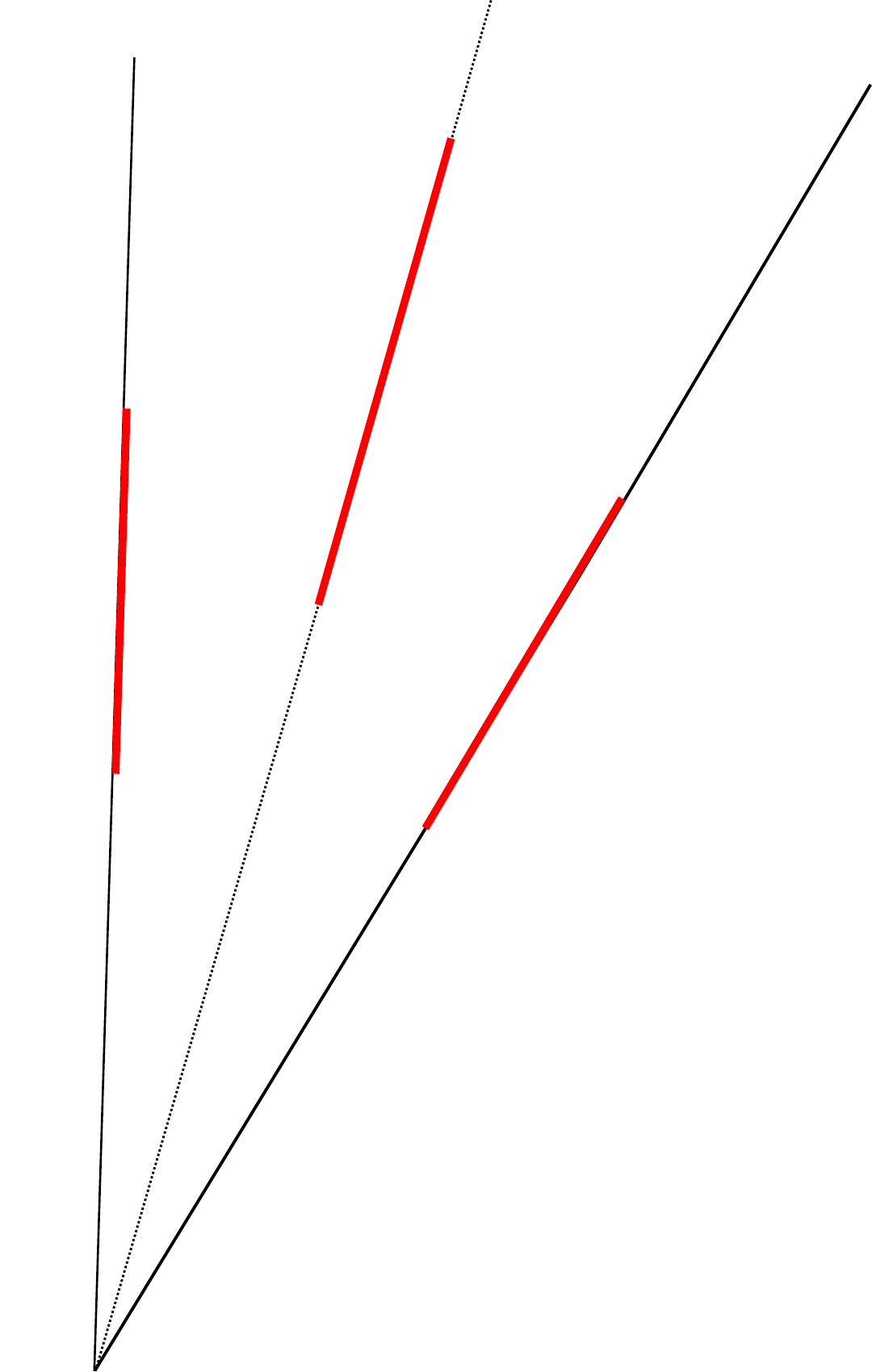  
    \end{minipage}\\ 
\end{figure}

\begin{remark}\label{notation-expl}  In the sequel we will  (in slight abuse) frequently use the notation $u\in V_{\beta,\delta}^{l,p}(K', S')$, where $K'$ denotes a fixed layer of the cone and  $S':= \overline{K'}\cap S$ the corresponding part of the singularity set, 
when dealing with functions $u\in V_{\beta,\delta}^{l,p}(K, S)$, where we only  consider their values on $K'$.  We remark that  in view of Lemma \ref{equiv-dist} below this is in good agreement with  the definition of the spaces $V_{\beta,\delta}^{l,p}(K,S)$, since the expression 
\begin{align}
\label{norm-2}
\|u| V_{\beta,\delta}^{l,p}(K',S')\|
&=\displaystyle \bigg( \int_{K'}\sum_{|\alpha|\leq l} 
\rho_{0}(x)^{p(\beta-l+|\alpha|)}\prod_{k=1}^{n} \bigg( \dfrac{r'_k(x)}{\rho_{0}(x)}  \bigg)^{p(\delta_k-l+|\alpha|)} |\partial^{\alpha}u(x)|^p\: dx
\bigg)^{1/p}<\infty, 
\end{align}
where $r'_{k}$ denotes the distance to the edge $M_k\cap \overline{K'}$ is equivalent with $\|u|V_{\beta,\delta}^{l,p}(K', S)\|$. Moreover, if $u$ has support in $K'$ we also see that the expressions $ \|u|V_{\beta,\delta}^{l,p}(K', S')\|$ and $ \|u|V_{\beta,\delta}^{l,p}(K, S)\|$ are equivalent. 
\end{remark}

We now construct an extension operator   from the layer $K'$ to  $\real^3$ in our weighted Sobolev spaces as follows.

\begin{lemma}
	\label{layerextopmod}
 Let $K\subset \mathbb{R}^3$ be a polyhedral cone and let $K'\subset K$ be a fixed layer as in \eqref{nov19}. Furthermore, let $l\in \mathbb{N}_0,\:  \delta \in \mathbb{R}^n $, $\beta\in \real$, and $1\leq   p< \infty$. Then there exists a bounded linear extension operator 
 $$\mathfrak{E}:\: V_{\beta,\delta}^{l,p}(K',S')\longrightarrow V_{\beta,\delta}^{l,p}(\mathbb{R}^3,S),$$ 
 where $S=M_1\cup ...\cup M_n$ and $S':= \overline{K'}\cap S$.
\end{lemma}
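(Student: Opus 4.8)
The plan is to reduce the extension problem on the layer $K'$ to finitely many local problems, each of which is an extension problem on a bounded wedge, and then invoke Theorem \ref{prop}. Since $K'$ is bounded away from the vertex, the weight $\rho_0(x)$ is comparable to a constant on $\overline{K'}$ (bounded above and below by $C_1$ and $C_2$), so the only genuinely singular weights in the $V_{\beta,\delta}^{l,p}(K',S')$-norm are the edge distances $r_k'(x)$; in particular $\|u|V_{\beta,\delta}^{l,p}(K',S')\|$ is equivalent to $\sum_{|\alpha|\le l}\big\|\prod_{k=1}^n r_k'(x)^{\delta_k-l+|\alpha|}\partial^\alpha u\,\big|L_p(K')\big\|$, the weight now being a product of powers of edge-distance functions only.

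First I would fix a finite open cover $\{U_0,U_1,\dots,U_m\}$ of $\overline{K'}$ with a subordinate smooth partition of unity $\{\phi_0,\dots,\phi_m\}$, chosen so that (i) $U_0$ stays away from the whole singularity set $S'$, so on $U_0$ all weights are bounded and $V_{\beta,\delta}^{l,p}$ coincides locally with the unweighted $W_p^l$, for which Stein's classical extension operator applies; (ii) each remaining $U_i$ meets at most one edge $M_{k(i)}$ of $K$ (shrinking the cover if necessary — the edges are pairwise disjoint away from the vertex, which is excluded from $K'$), and on $U_i$ the set $\overline{K'}\cap U_i$ is, after a smooth (indeed affine/rotational) change of coordinates, a bounded piece of a wedge with edge along the $x_1$-axis; (iii) near an edge, the other edge-distance functions $r_j'$, $j\ne k(i)$, are bounded above and below on $U_i$, so only the single weight $r_{k(i)}'\sim r$ (distance to the $x_1$-axis) survives — this is exactly the setting of $V_\delta^{l,p}(W,\mathbb{R}_*^2)$ with $\delta=\delta_{k(i)}$.

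Next, for each $i\ge1$ I would transport $\phi_i u$ via the coordinate diffeomorphism to a function on a bounded wedge $W$, extend it to $\mathbb{R}^3$ using Theorem \ref{prop} (after, if needed, multiplying by a cutoff to keep it compactly supported and using that a bounded wedge is, up to intersecting with a ball, a special Lipschitz domain with $\mathbb{R}_*^2\subset\partial W$), transport back, and multiply by another smooth cutoff $\eta_i$ that is $1$ on $\mathrm{supp}\,\phi_i$ and supported in $U_i$. For $i=0$ I would use Stein's extension on the smooth part. The final operator is $\mathfrak{E}u:=\sum_{i=0}^m \eta_i\,(\mathfrak{E}_i(\phi_i u))$. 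Linearity is clear; for boundedness I would use the localization/equivalence of norms (Remark \ref{notation-expl} together with the finite-cover structure) to bound $\|\mathfrak{E}u|V_{\beta,\delta}^{l,p}(\mathbb{R}^3,S)\|$ by $\sum_i \|\eta_i \mathfrak{E}_i(\phi_i u)|V\|$, then use that multiplication by the smooth compactly supported $\eta_i$ is a bounded pointwise multiplier (cf. \eqref{okt15d}), the invariance of the relevant $V_\delta^{l,p}$-norm under the smooth bounded coordinate changes (Jacobian and its derivatives bounded above and below on the compact set, edge distances comparable under the diffeomorphism), the boundedness of $\mathfrak{E}_i$ from Theorem \ref{prop} (resp. Stein), and finally that $\|\phi_i u|V\|\lesssim\|u|V_{\beta,\delta}^{l,p}(K',S')\|$ since $\phi_i$ is a bounded multiplier. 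That $\mathfrak{E}u|_{K'}=u$ follows since $\sum_i\eta_i\phi_i=\sum_i\phi_i=1$ on $\overline{K'}$ (choosing $\eta_i\equiv1$ on $\mathrm{supp}\,\phi_i$) and each $\mathfrak{E}_i$ is an extension.

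The main obstacle I expect is the bookkeeping at the interface between the "vertex-free but edge-touching" regions and the verification that the relevant weighted norms are genuinely preserved under the localization and the coordinate changes: one must check that on each patch only one edge-weight is singular while all the other weights (including $\rho_0$) are two-sided bounded, and that the diffeomorphism straightening a wedge edge to the $x_1$-axis is compatible with the $V_\delta^{l,p}(\cdot,\mathbb{R}_*^2)$ structure — i.e. that $r(\Phi(x))\sim r_{k}'(x)$ and that $\Phi$ and $\Phi^{-1}$ have bounded derivatives of all orders up to $l$ on the compact sets in question, so that the chain rule produces only bounded multiplier factors. Once this geometric/analytic compatibility is in place, the gluing and the norm estimate are routine, and the statement follows.
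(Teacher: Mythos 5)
Your overall strategy --- localize $K'$ by a finite cover so that each patch meets at most one edge, observe that $\rho_0$ and all but one edge distance are bounded above and below on each patch, extend patchwise, and glue with smooth multipliers --- is exactly the strategy of the paper, and your gluing/norm-estimate step matches Step 4 of the paper's proof of Lemma \ref{layerextopmod}. The gap is in the single-patch extension, which is where the actual work of the lemma lies. You invoke Theorem \ref{prop} for ``a bounded wedge \dots up to intersecting with a ball, a special Lipschitz domain with $\mathbb{R}_*^2\subset\partial W$'', but a bounded wedge is not a special Lipschitz domain in the sense of \eqref{speclipdom}: the hypothesis $\mathbb{R}_*^2\subset\partial D$ requires the \emph{entire} line containing the edge to lie in the boundary, whereas for a patch around an edge $M_k$ of the layer only a bounded segment of that line lies in $\partial(K'\cap U_i)$; near the endpoints of that segment the boundary consists of the caps $|x|=C_1$, $|x|=C_2$, which are transverse to the edge. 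Your cutoff cannot remove this difficulty: some patch of the partition of unity must contain the point where $M_k$ meets the cap $|x|=C_1$, the localized function does not vanish there, and extending it by zero across the cap destroys membership in $W^l_p$ for $l\geq 1$. So a genuine extension in the direction \emph{along} the edge, with control in the norm weighted by the distance to the edge, is needed before Theorem \ref{prop} becomes applicable.

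The paper supplies precisely this missing step (Steps 1--3 of its proof): after a cutoff in the edge direction one reduces to the half-unbounded wedge $\Omega_+$, and Stein's operator is then run \emph{in the edge direction} (with $\omega\equiv 0$ on the sector $G_0$, using that the operator only consumes function values from one side) to extend from $\Omega_+$ to the infinite wedge $\Omega_0$; the key observation is that the weight $|x'|^{\delta_k}$ is independent of the variable along the edge, so the Hardy-inequality condition holds trivially (with $v=x$, $w=x^2$) and the weighted estimate \eqref{aug14c2} follows by integrating the unweighted one-dimensional estimate against the weight. Only after this step is one on $\Omega_0$, which \emph{is} a special Lipschitz domain with $\mathbb{R}_*^2$ in its boundary, so that Theorem \ref{prop} applies. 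If you insert this intermediate extension $\Omega_+\to\Omega_0$ (or an equivalent argument across the caps), the rest of your proposal goes through essentially as written.
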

\begin{proof} 


\textit{Step 1:}  One can see easily that the layer $K'$ can be covered by finitely many diffeomorphic images of bounded wedges $W_1,...,W_n$. Therefore, we first give an extension for the single wedges $W_k$ w.r.t. the spaces $V_{\delta_k}^{l,p}$, where $\delta_k\in \mathbb{R}$. Then, using a localization argument we provide an extension operator for the layer $K'$ in the space $V_{\beta, \delta}^{l,p}$ with $\beta\in \real$, $\delta=(\delta_1,...,\delta_n)$ based on the operators for the single wedges.\\

           \begin{minipage}{0.65\textwidth}  
            Let $\Omega$ be a bounded wedge representing the neighbourhood of the edge $M_k$ on $K'$, which in cylinder coordinates is given by
$$\Omega=\{ x=(r,\varphi,x_3):\: 0<x_3<1,\  0<r<1,\ 0<\varphi<\varphi_0  \}$$
for some $0<\varphi_0<2\pi$.  
    \end{minipage}\hfill \begin{minipage}{0.25\textwidth}    
   \def\svgwidth{100pt}    
\begingroup%
  \makeatletter%
  \providecommand\color[2][]{%
    \errmessage{(Inkscape) Color is used for the text in Inkscape, but the package 'color.sty' is not loaded}%
    \renewcommand\color[2][]{}%
  }%
  \providecommand\transparent[1]{%
    \errmessage{(Inkscape) Transparency is used (non-zero) for the text in Inkscape, but the package 'transparent.sty' is not loaded}%
    \renewcommand\transparent[1]{}%
  }%
  \providecommand\rotatebox[2]{#2}%
  \ifx\svgwidth\undefined%
    \setlength{\unitlength}{206.5777169bp}%
    \ifx\svgscale\undefined%
      \relax%
    \else%
      \setlength{\unitlength}{\unitlength * \real{\svgscale}}%
    \fi%
  \else%
    \setlength{\unitlength}{\svgwidth}%
  \fi%
  \global\let\svgwidth\undefined%
  \global\let\svgscale\undefined%
  \makeatother%
  \begin{picture}(1,0.96737206)%
    \put(0,0){\includegraphics[width=\unitlength,page=1]{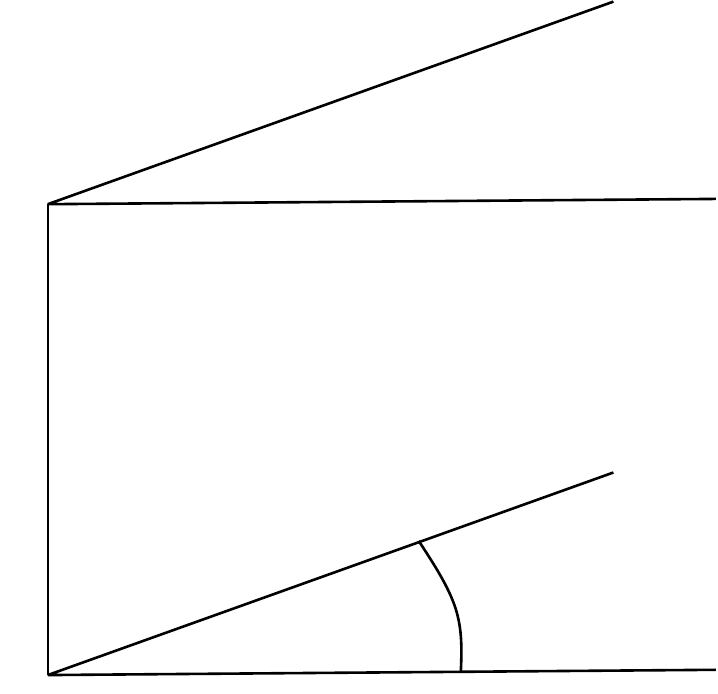}}%
    \put(-0.0078852,0.66994592){\color[rgb]{0,0,0}\makebox(0,0)[lb]{\smash{1}}}%
    \put(-0.00269859,0.00606563){\color[rgb]{0,0,0}\makebox(0,0)[lb]{\smash{}}}%
    \put(-0.00269859,0.00087928){\color[rgb]{0,0,0}\makebox(0,0)[lb]{\smash{0}}}%
    \put(0,0){\includegraphics[width=\unitlength,page=2]{omega.pdf}}%
    \put(0.29812212,0.055338){\color[rgb]{0,0,0}\makebox(0,0)[lb]{\smash{$\varphi_0$}}}%
    \put(0,0){\includegraphics[width=\unitlength,page=3]{omega.pdf}}%
    \put(0.38629365,0.37949826){\color[rgb]{0,0,0}\makebox(0,0)[lb]{\smash{$\Omega$}}}%
  \end{picture}%
\endgroup%
  
  5  \label{fig:omega}
    \end{minipage}\\

In this case the weight function of the space $V_{\delta_k}^{l,p}$, 
\begin{align}
\label{weightcone2}
\rho(x):= r_k(x)^{\delta_k-l+|\alpha|}\sim r(x)^{\delta_k-l+|\alpha|},
\end{align}
is equivalent to the power of the distance of a point $x$ to the edge of the bounded wedge $\Omega$ (represented by the $x_3$-axis in cylinder coordinates).\\
    
 \begin{minipage}{0.65\textwidth}  
 Via multiplication with a cut-off function, we can reduce the problem to considering the half-unbounded wedge
$$\Omega_+=\{ x=(r,\varphi,x_3):\: x_3>0,\  r>0,\ 0<\varphi<\varphi_0  \}.$$
\end{minipage}\hfill \begin{minipage}{0.25\textwidth}    
    \def\svgwidth{100pt}    
\begingroup%
  \makeatletter%
  \providecommand\color[2][]{%
    \errmessage{(Inkscape) Color is used for the text in Inkscape, but the package 'color.sty' is not loaded}%
    \renewcommand\color[2][]{}%
  }%
  \providecommand\transparent[1]{%
    \errmessage{(Inkscape) Transparency is used (non-zero) for the text in Inkscape, but the package 'transparent.sty' is not loaded}%
    \renewcommand\transparent[1]{}%
  }%
  \providecommand\rotatebox[2]{#2}%
  \ifx\svgwidth\undefined%
    \setlength{\unitlength}{193.14228443bp}%
    \ifx\svgscale\undefined%
      \relax%
    \else%
      \setlength{\unitlength}{\unitlength * \real{\svgscale}}%
    \fi%
  \else%
    \setlength{\unitlength}{\svgwidth}%
  \fi%
  \global\let\svgwidth\undefined%
  \global\let\svgscale\undefined%
  \makeatother%
  \begin{picture}(1,0.71319452)%
    \put(0,0){\includegraphics[width=\unitlength,page=1]{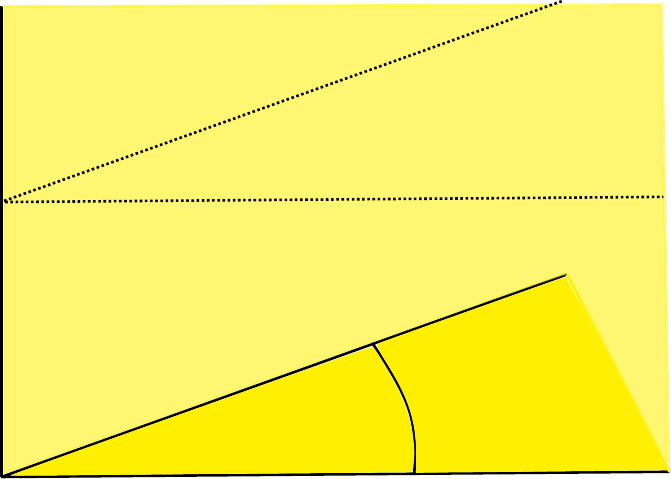}}%
    \put(0.11915131,0.27437788){\color[rgb]{0,0,0}\makebox(0,0)[lb]{\smash{$\Omega_+$}}}%
    \put(0.2633824,0.05525738){\color[rgb]{0,0,0}\makebox(0,0)[lb]{\smash{$\varphi_0$}}}%
  \end{picture}%
\endgroup%
  
     \end{minipage}\\[2\baselineskip]
More precisely, let $\eta_1$ be a smooth function on $\real^3$ such that $\eta(x)=1$ for $x_3\geq 2/3$ and $\eta(x)=0$ for $x_3\leq 1/3$, and put $\eta_2=1-\eta_1$.
Then we decompose $u=\eta_1 u+\eta_2 u$. Clearly, we can extend $\eta_1 u$ to $x_3<0$ and $\eta_2 u$ to $x_3>1$, respectively, by zero --- both situations then obviously being equivalent to discussing functions on $\Omega_+$.

\medskip

\begin{minipage}{0.65\textwidth}  
We proceed as follows.
First we give an extension from the half-unbounded wedge to the unbounded wedge,
$$\Omega_0=\{ x=(r,\varphi,x_3):\: x_3\in \mathbb{R},\  r>0,\ 0<\varphi<\varphi_0  \},$$
and subsequently refer to Theorem \ref{prop} for another extension to $\mathbb{R}^3$ for the special Lipschitz domain $\Omega_0$ (see Remark \ref{remark3.6}), where the $x_1$-axis is rotated to the edge of the wedge. \\
\end{minipage}\hfill \begin{minipage}{0.25\textwidth}    
	\def\svgwidth{100pt}    
\begingroup%
  \makeatletter%
  \providecommand\color[2][]{%
    \errmessage{(Inkscape) Color is used for the text in Inkscape, but the package 'color.sty' is not loaded}%
    \renewcommand\color[2][]{}%
  }%
  \providecommand\transparent[1]{%
    \errmessage{(Inkscape) Transparency is used (non-zero) for the text in Inkscape, but the package 'transparent.sty' is not loaded}%
    \renewcommand\transparent[1]{}%
  }%
  \providecommand\rotatebox[2]{#2}%
  \ifx\svgwidth\undefined%
    \setlength{\unitlength}{195.16159435bp}%
    \ifx\svgscale\undefined%
      \relax%
    \else%
      \setlength{\unitlength}{\unitlength * \real{\svgscale}}%
    \fi%
  \else%
    \setlength{\unitlength}{\svgwidth}%
  \fi%
  \global\let\svgwidth\undefined%
  \global\let\svgscale\undefined%
  \makeatother%
  \begin{picture}(1,1.08264085)%
    \put(0,0){\includegraphics[width=\unitlength,page=1]{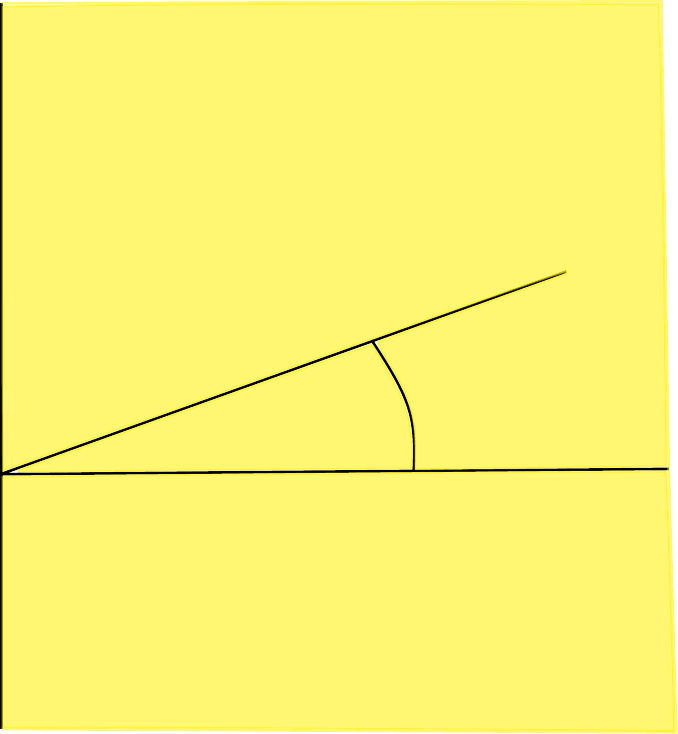}}%
    \put(0.279588,0.42091218){\color[rgb]{0,0,0}\makebox(0,0)[lb]{\smash{$\varphi_0$}}}%
    \put(0.54509804,0.99836132){\color[rgb]{0,0,0}\makebox(0,0)[lb]{\smash{$\Omega_0$}}}%
    \put(0,0){\includegraphics[width=\unitlength,page=2]{omeganul.pdf}}%
  \end{picture}%
\endgroup%
  
 \end{minipage}\\

We summarize the problem in a diagram: 
\begin{align*}
\Omega  \ \xrightarrow[\text{function }]{\text{cut-off}} \ \Omega_+ \ 
 \xrightarrow[\text{  }]{\boxed{{\text{Steps } 2+3}}} \ \Omega_0 \ \xrightarrow[\text{  }]{\text{Theorem \ref{prop}}} \ \mathbb{R}^3
\end{align*}
Thus, we only need to show the second arrow and use Stein's extension operator in order to give this extension.\\

\textit{Step 2:} We need some more notation and preparatory remarks: 
Let $G$ be a special Lipschitz domain as in Theorem \ref{prop},
$$G=\{  x\in \mathbb{R}^3: \: x=(x',x_3), \ x'\in \mathbb{R}^2, \ x_3>\omega(x')  \},$$
where $\omega:\mathbb{R}^2\longrightarrow \mathbb{R}$ is a Lipschitz function.  
Now we consider the sets 
$$G_{x'}^-=\{ (x',x_3): \: x_3<\omega(x')  \}\qquad \text{and}\qquad  G_{x'}^+=\{ (x',x_3): \: x_3>\omega(x')  \}$$ for fixed $x'\in \mathbb{R}^2$. 
 \begin{figure}[ht] 
 \begin{minipage}{0.55\textwidth}
We see that Stein's extension operator from \eqref{steinextop} in points $x\in G_{x'}^-$ uses only function values of $u$ in $G_{x'}^+$. This allows us to apply the same definition also for functions given on the {\it restricted special Lipschitz domain}
$$ G=\{x\in \mathbb{R}^3:\:  x=(x',x_3),\ x'\in G_0, \ x_3>\omega(x')   \},$$
where $G_0\subset \mathbb{R}^2$ is a sufficiently smooth domain, which we can choose according to our needs and $\omega$ is a Lipschitz function on $G_0$. 
 \end{minipage}\hfill \begin{minipage}{0.4\textwidth}
      \centering              
    \def\svgwidth{200pt}    
    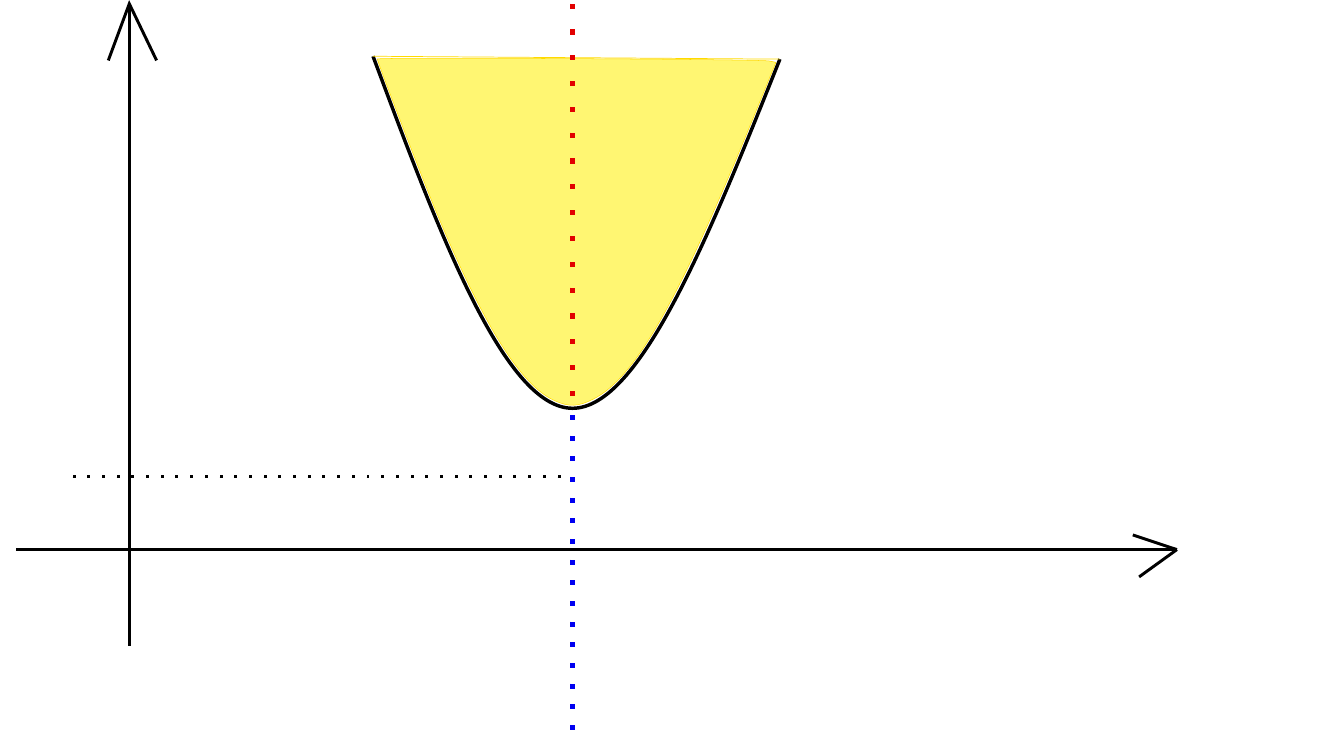  
    \caption{Special Lipschitz domain $G$ above graph of $\omega(x')$}
   \label{fig:grafikon}
   \end{minipage}\\ 
\end{figure}
 

This particularly applies to half-unbounded wedges by choosing
$ G_0=\{ (r,\varphi):\: r>0, 0<\varphi<\varphi_0  \},$ 
where $(r,\varphi)$ represents $x'\in \mathbb{R}^2$ in polar coordinates and the function $\omega$ can be defined as
 $$\omega(x')\equiv 0 \text{  for all  } x'\in G_0.$$
Then indeed $\Omega_+$ is a restricted special Lipschitz domain in view of
 $$G:=\Omega_+=G_0 \times \mathbb{R}^+=\{   
 (x',x_3):\: x'\in G_0,\  x_3>\omega(x')=0
 \}.$$
 
\textit{Step 3:} Now we can apply Theorem \ref{prop} on the domain $\Omega_+=G_0 \times \mathbb{R}^+$ with some careful modifications: 
For the norm estimates in the spaces $V_{\delta_k}^{l,p}$ we use Theorem \ref{prop}, in particular, the inequalities \eqref{aug5a} and \eqref{aug5ba} with $\omega(x')=0$ and weight $\rho(x',x_3)\equiv 1$  (note that the Hardy inequality \eqref{hardya} also works with $v=x$ and $w=x^2$ when $\rho\equiv 1$). This gives 
\begin{align}
\label{aug14a2}
\displaystyle\int_{-\infty}^{0}
 |\mathfrak{E}u(x',x_3)|^p\: dx_3
\lesssim  \int_{0}^{\infty} |u(x',x_3)|^p  \: dx_3.
\end{align}
and
\begin{align}
\label{aug14b2}
\displaystyle\int_{0}^{\infty}
 |\mathfrak{E}u(x',x_3)|^p\: dx_3=\int_{0}^{\infty}
 |u(x',x_3)|^p\: dx_3.
\end{align}
Then we  sum up  \eqref{aug14a2} and \eqref{aug14b2}, multiply with the weight $\rho(x',x_3)=|x'|^{\delta_k}$ (measuring the distance to the $x_3$-Axis) and integrate w.r.t. $x_0\in G_0$. Since the weight is independent of $x_3$ this leads to 
\begin{align}
\label{aug14c2}
\int_{G_0}\int_{\mathbb{R}} \rho(x',x_3) |\mathfrak{E}u(x',x_3)|^p \:dx_3\: dx'
\lesssim \int_{G_0} \int_{0}^{\infty}\rho(x',x_3) |u(x',x_3)|^p \:dx_3\: dx'.
\end{align}
Hence, we obtain 
\begin{align*}
\|\mathfrak{E}u|V_{\delta_k}^{0,p}(\Omega_0,\mathbb{R}^2_*)\| \lesssim \|u|V_{\delta_k}^{0,p}(\Omega_+,\mathbb{R}^2_*)\|.
\end{align*}
For $l>0$ we can use a similar argument as in Theorem \ref{prop}.  
Summarizing, Stein's extension operator can be modified to give an extension from $\Omega_+$ to $\Omega_0$ together with norm estimates in associated weighted Sobolev spaces $V_{\delta_k}^{l,p}$, $\delta_k\in \mathbb{R}$. Finally, on $\Omega_0$ we can apply the results of Theorem \ref{prop} to extend functions from $\Omega_0$ to $\mathbb{R}^3$.\\
We denote corresponding extension operators for the wedges $W_1,...,W_n$ by $\mathfrak{E}_1,...,\mathfrak{E}_n$.\\

\textit{Step 4:} The results for the single wedges can now be transferred to the layer $K'$, and spaces $V_{\beta,\delta}^{l,p} $, $\delta\in \real^n$,  where the weight function contains $r'_1,...,r'_n$, i.e., the distances to the edges of all wedges along the layer. In order to glue  these operators properly together we need some preparations: 
 Let 
 $$ \overline{K'}\subset \bigcup_{j=1}^n U_j, \qquad \text{where} \quad  W_j=K'\cap U_j \quad \text{is a wedge}$$ 
 as in Step 1 and $U_j$ does not intersect the edges $M_i$ for $i\neq j$. Since $K'$ is a bounded Lipschitz domain, the sets $U_j$ can be chosen such that there exists an $\varepsilon>0$ with $B(x,\varepsilon)\subset U_j$ for some $j$ for every $x\in \partial K'$. Then we define $U_j^{\varepsilon}=\{ x\in U_j:\: B(x,\varepsilon)\subset U_j  \}$. Moreover, let $\varphi_1,...,\varphi_n$ be non-negative smooth functions with $$\varphi_j(x)=1 \text{ on } U_j^{\varepsilon/2} \qquad \text{and}\qquad  \text{supp}\: \varphi_j\subset U_j.$$
 
 \begin{figure}[!h]
\begin{minipage}[b]{0.5\textwidth}\hspace{2cm}
  \def\svgwidth{180pt}
  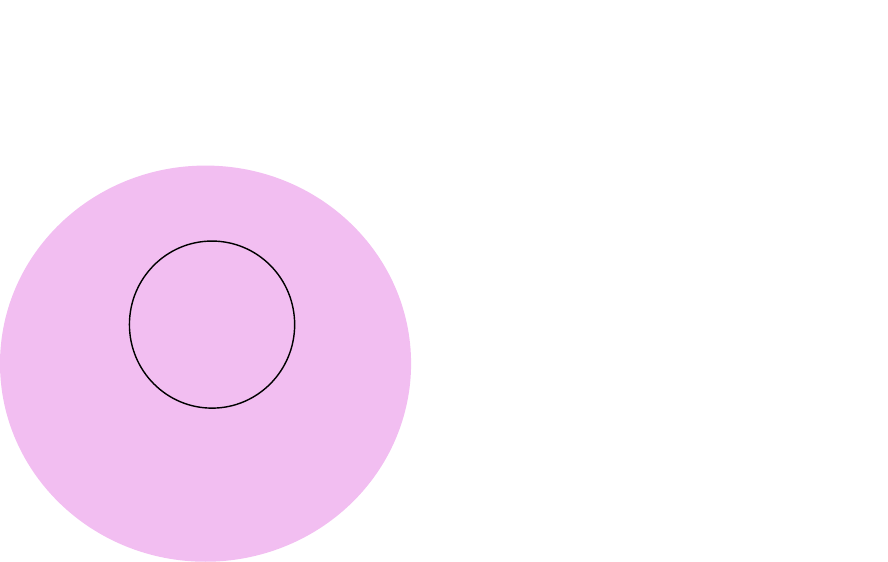
    \caption{The set $U_j$}
  \end{minipage}\hfill 
    \begin{minipage}[b]{0.5\textwidth} \hspace{0.8cm}
  \def\svgwidth{180pt}
 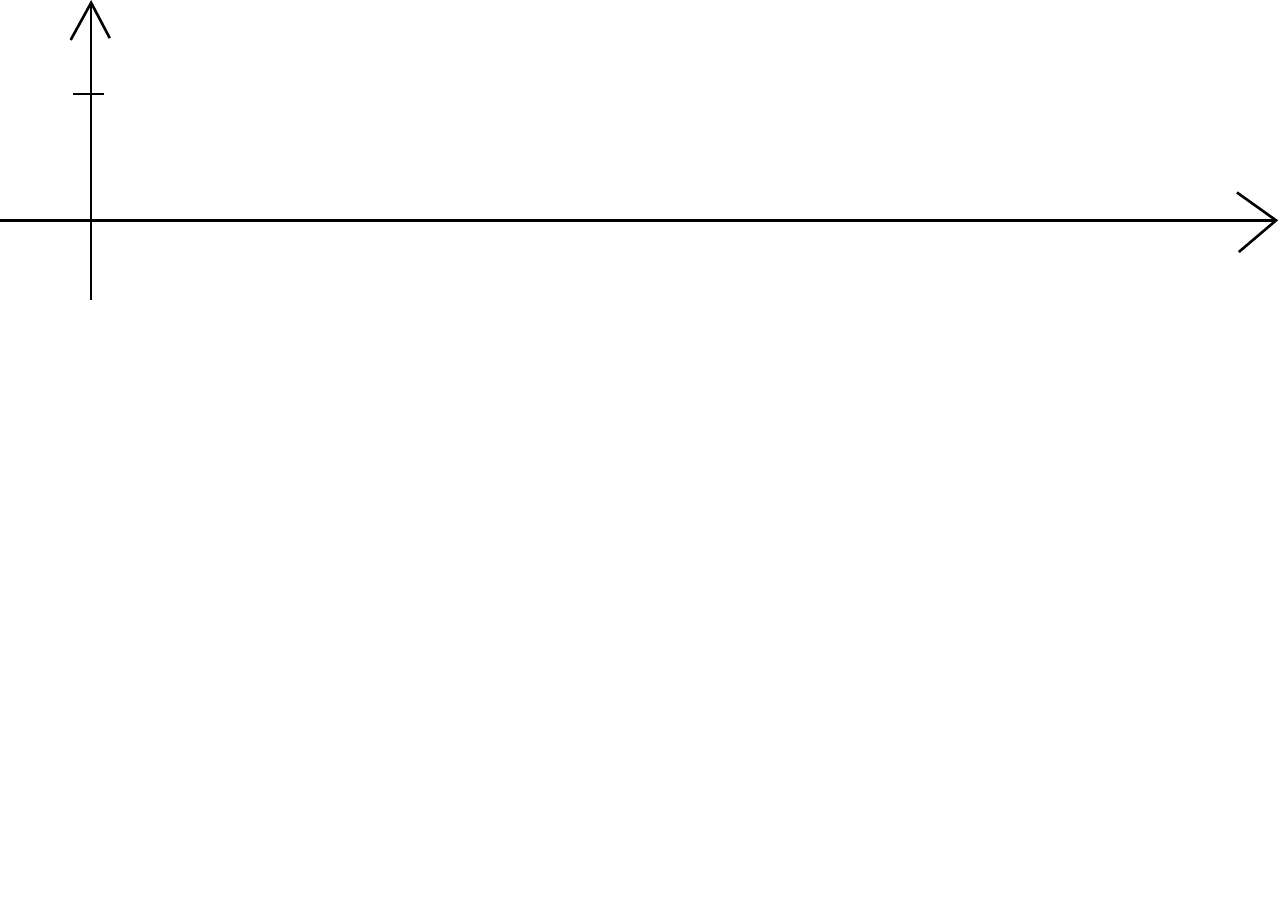
    \caption{The function $\varphi_j$}
  \end{minipage}
\end{figure}

 Furthermore, let $\Psi$ be a smooth function with 
$$\text{supp }\Psi \subset K'\qquad \text{and} \qquad \Psi(x)=1 \ \text{ for } \ x\in K' \ \text{ with }\  \mathrm{dist}(x,\partial K')>\varepsilon/2,$$
and let $\Phi$ be another smooth function with 
\begin{align*}
\Phi =1-\Psi \text{ on } K'\qquad \text{and} \qquad  \text{supp }\Phi\subset \{ x\in \mathbb{R}^3:\:
\mathrm{dist}(x,\partial K')<\varepsilon/2
  \} \subset \bigcup_j U_j^{\varepsilon/2}. 
\end{align*}
\begin{figure}[!h] 
\centering 
    \def\svgwidth{250pt}    
  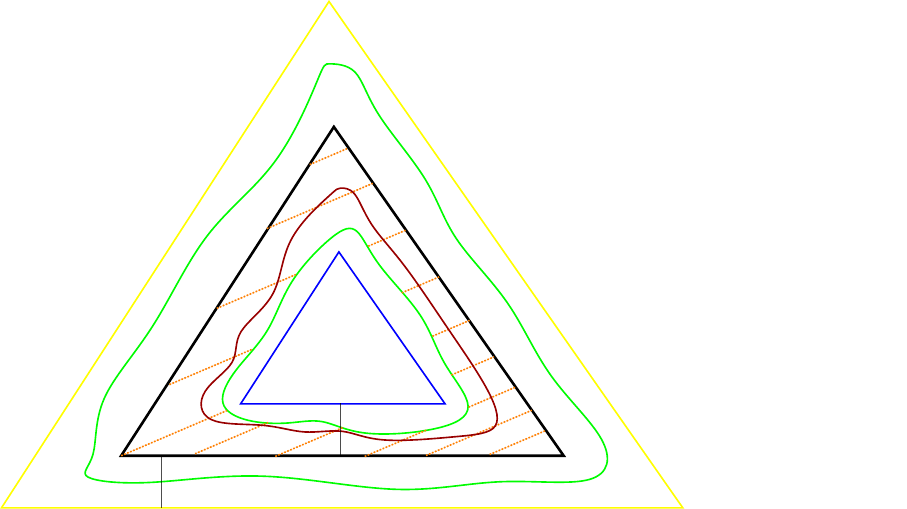  
   \caption{The functions $\Psi$ and $\Phi$ from above point of view}
  \label{fig:bmeg}
\end{figure}

We glue together the operators for the wedges and define 
$$\mathfrak{E}u(x)=\Psi(x)u(x)+\Phi(x)\dfrac{\sum_{j=1}^{n}\varphi_j(x)\mathfrak{E}_j(\varphi_j u)(x)}{\sum_{j=1}^{n}\varphi_j(x)^2}.$$
The second term is well-defined, since for every $x\in \text{supp }\Phi$ we have $\varphi_j(x)=1$ for at least one $j$. Since $ \mathfrak{E}u(x)=u(x)$ for $x\in K'$ (according to the definition of the functions $\Psi$ and $\Phi$) and 
\begin{equation}\label{supp-ext}
\mathfrak{E}u=0\quad \text{on} \quad \{x\in \mathbb{R}^3\setminus K': \: \mathrm{dist}(x,\partial K')>\varepsilon/2 \},  
\end{equation}
 we see that $\mathfrak{E}$ indeed defines an extension from the layer $K'$ to $\mathbb{R}^3$, which is continuous because every term of $\mathfrak{E}$ is continuous.\\
It remains to verify the corresponding norm estimate in the spaces $V_{\beta,\delta}^{l,p}$ with $\delta=(\delta_1,...,\delta_n)$, i.e.,
 \begin{align}
     \label{okt15c}
     \|\mathfrak{E}u|V_{\beta,\delta}^{l,p}(\mathbb{R}^3,S) \|\lesssim  \|u|V_{\beta,\delta}^{l,p}(K',S')\|.
 \end{align}
The functions $\Psi,\Phi,\varphi_j$ are smooth with compact supports, and are therefore multipliers in the spaces $ V_{\beta,\delta}^{l,p}$, cf. \eqref{okt15d}.
An easy calculation gives 
 \begin{align*}
\|\mathfrak{E}u|V_{\beta,\delta}^{l,p}(\mathbb{R}^3,S) \|& \ =\ \left\|
 \Psi u+\Phi\dfrac{\sum_{j=1}^{n}\varphi_j\mathfrak{E}_j(\varphi_j u)}{\sum_{j=1}^{n}\varphi_j^2}
 \Big|V_{\beta,\delta}^{l,p}(\mathbb{R}^3,S) \right\|\\
 & \ \lesssim \ \|  \Psi u| V_{\beta,\delta}^{l,p}(\mathbb{R}^3,S) \|     +\left\| \sum_{j=1}^{n}\varphi_j\mathfrak{E}_j(\varphi_j u)   | V_{\beta,\delta}^{l,p}(\mathbb{R}^3,S)  \right\|\\
 & \ \lesssim \  \|u|V_{\beta,\delta}^{l,p}(K',S) \|   
 +\displaystyle\sum_{j=1}^{n} \| \varphi_j\mathfrak{E}_j(\varphi_j u)  | V_{\beta,\delta}^{l,p}(\mathbb{R}^3,S)\| \  =: \ I+II. 
 \end{align*}
Since for the first term $I$ we have
 $ \|u|V_{\beta,\delta}^{l,p}(K',S) \|\sim \|u|V_{\beta,\delta}^{l,p}(K',S') \|$ 
it suffices to consider the second term $II$. In the estimation of $II$ we will use the following norm estimates of the operators for single wedges,
 \begin{align*}
 \|\mathfrak{E}_k u|V_{\delta_k}^{l,p}(\mathbb{R}^3,M_k)\|\lesssim  \|u|V_{\delta_k}^{l,p}(W_k,M_k)\| \sim \|u|V_{\delta_k}^{l,p}(W_k,M_k\cap \overline{ W_k})\|\qquad \forall \   k\in \{1,...,n   \}.
  \end{align*}
  Moreover,  by \eqref{supp-ext} we have $\mathrm{supp}(\mathfrak{E} u)\subset \bigcup_{j=1}^n U_j$, i.e., the support  is contained in a small neighbourhood of $K'$. Since on $U_j$ (neighborhood of  the wedge $W_j$) we have for the distance  functions $\rho_0, r_1,...,r_{j-1},r_{j+1},...,r_n\sim 1$, it follows that  on $U_j$ the norms of $V_{\delta_j}^{l,p}$ and $V_{\beta,\delta}^{l,p}$ are equivalent w.r.t. $S$. A similar statement holds for $\rho_0',r'_{1},...,r'_{n}$ replacing $S$ by $S'$. This ultimately yields
  \begin{align*}
 \displaystyle\sum_{j=1}^{n} &\| \varphi_j\mathfrak{E}_j(\varphi_j u)  | V_{\beta,\delta}^{l,p}(\mathbb{R}^3,S) \| \\ 
 &\sim \displaystyle\sum_{j=1}^{n} \| \varphi_j\mathfrak{E}_j(\varphi_j u)  | V_{\delta_j}^{l,p}(\mathbb{R}^3,M_j) \|     
 \lesssim \displaystyle\sum_{j=1}^{n} \| \mathfrak{E}_j(\varphi_j u)  | V_{\delta_j}^{l,p}(\mathbb{R}^3,M_j) \| \\
  &\lesssim \displaystyle\sum_{j=1}^{n} \| \varphi_j u  | V_{\delta_j}^{l,p}(W_j, \overline{W_j}\cap M_j) \| 
  \lesssim \displaystyle\sum_{j=1}^{n} \| \varphi_j u  | V_{\beta,\delta}^{l,p}(K', S') \|
  \lesssim \|u|V_{\beta,\delta}^{l,p}(K', S')  \|. 
  \end{align*}
 Therefore,   \eqref{okt15c} holds, which finally completes the proof.
\end{proof}

 \subsection{Extension operator for the cone}

 Having established an extension operator for our weighted Sobolev spaces on the fixed layer $K'\subset K$ of the cone in Lemma \ref{layerextopmod}   we are now going to construct an extension operator on the whole cone $K$ as follows. We first  decompose the cone $K$   into several layers $K_j$ and use extension operators on each layer (which are uniformly bounded w.r.t. $j$). Afterwards we   glue these extension operators together in a suitable way to obtain an extension for the whole cone. 
 For this let 
 	\begin{align}
 	\label{layerofthecone}
 	    K_j=\{ x\in K: 2^{-j-1}<|x|<2^{-j+1}  \}
 	\end{align}
 	be a (dyadic) layer of the cone. The following lemma shows that on these layers the distance of a point $x\in K_j$ to the restricted  set $S_j:=\overline{K_j}\cap  S$ is equivalent to its distance to the whole set $S$.

  	\medskip 
 
 \begin{lemma} \label{equiv-dist}   
For  the layer  $E_j:=\{
 x\in \real^3: 2^{-j-1}<|x|<2^{-j+1}\}$ and the set  $S_j$ defined above consider the respective distance function on $E_j$ w.r.t. the edge $M_k$, 
 	$$r_{k,j}(x):= \mathrm{dist}(x,S_j\cap M_k).$$
 Then we have 
 $$r_{k,j}(x)\sim r_k(x) \qquad \text{for all } \quad x\in  E_j, $$
 i.e., the distance of a point $x\in E_j$ to the set $S_j\cap M_k$ is equivalent to the distance of $x$ to the whole edge $M_k$. 
 \end{lemma}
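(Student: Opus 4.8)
The statement splits into two inequalities, $r_{k,j}(x)\geq r_k(x)$ and $r_{k,j}(x)\leq c\,r_k(x)$, the constant $c$ being required independent of $j$, $k$ and $x\in E_j$. The first is immediate: $S_j\cap M_k\subset M_k$, so the distance to the smaller set is larger. Hence the entire content lies in the reverse estimate and, \emph{crucially}, in the uniformity of its constant.

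To set up, recall that $M_k$ is an edge of $K$, i.e.\ a half-line emanating from the vertex $0$, so I would write $M_k=\{t e_k:t\geq 0\}$ for a unit vector $e_k$. Then $S_j\cap M_k=\overline{K_j}\cap M_k=\{t e_k:2^{-j-1}\leq t\leq 2^{-j+1}\}$ is just the closed segment of $M_k$ cut out by the dyadic shell. For $x\in E_j$ the nearest point of $M_k$ to $x$ is $t^\ast e_k$ with $t^\ast:=\max(0,\langle x,e_k\rangle)$, and the first observation to record is $t^\ast\leq|x|<2^{-j+1}$, so that the foot of the perpendicular never lies \emph{above} the segment $S_j\cap M_k$. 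This leaves only two cases: if $t^\ast\geq 2^{-j-1}$ then $t^\ast e_k\in S_j\cap M_k$ and $r_{k,j}(x)=r_k(x)$, and we are done; if $t^\ast<2^{-j-1}$, some work is needed.

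In that remaining case the idea is to use the near endpoint $y:=2^{-j-1}e_k\in S_j\cap M_k$ as a competitor and to bound $|x-y|\leq c\,r_k(x)$. When $\langle x,e_k\rangle\geq 0$, orthogonality of $x-\langle x,e_k\rangle e_k$ to $e_k$ gives the two Pythagorean identities $r_k(x)^2=|x|^2-t^{\ast 2}$ and $|x-y|^2=r_k(x)^2+(2^{-j-1}-t^\ast)^2$. Since $|x|>2^{-j-1}$, the first yields $r_k(x)^2>(2^{-j-1})^2-t^{\ast 2}=(2^{-j-1}-t^\ast)(2^{-j-1}+t^\ast)\geq 2^{-j-1}(2^{-j-1}-t^\ast)$, hence $2^{-j-1}-t^\ast<2^{j+1}r_k(x)^2$; combined with $r_k(x)\leq|x|<2^{-j+1}$ this gives $2^{-j-1}-t^\ast<2^{j+1}\cdot 2^{-j+1}r_k(x)=4\,r_k(x)$, and substituting back leaves $|x-y|^2<17\,r_k(x)^2$. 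The sub-case $\langle x,e_k\rangle<0$ (where $t^\ast=0$ and $r_k(x)=|x|$) is even simpler: $|x-y|\leq|x|+2^{-j-1}<2|x|=2\,r_k(x)$. Either way $r_{k,j}(x)\leq|x-y|\leq\sqrt{17}\,r_k(x)$, which completes the argument.

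I do not anticipate a genuine obstacle; the one point to watch throughout is that the constant stays independent of $j$. This is exactly what the bounded aspect ratio $2^{-j+1}/2^{-j-1}=4$ of the dyadic shells provides — it enters through the single inequality $r_k(x)<2^{-j+1}=4\cdot 2^{-j-1}$ used above. Equivalently, one may first reduce to $j=0$ by the dilation $x\mapsto 2^{j}x$, which maps $E_j$ onto $E_0$, fixes the ray $M_k$, maps $S_j\cap M_k$ onto $S_0\cap M_k$, and multiplies every distance by $2^{j}$, so that $r_{k,j}(x)/r_k(x)=r_{k,0}(2^{j}x)/r_k(2^{j}x)$ and only the single shell $E_0$ has to be treated.
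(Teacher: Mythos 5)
Your proof is correct and follows essentially the same route as the paper's: both decompose $x$ into components parallel and perpendicular to $M_k$, reduce to the case where the foot of the perpendicular falls below the segment $S_j\cap M_k$, and use Pythagoras together with $|x|>2^{-j-1}$ to bound the gap to the near endpoint by a multiple of $r_k(x)$. The only difference is cosmetic (the paper gets constant $\sqrt{2}$ directly from $r_k^2\geq\Delta^2$, while you get $\sqrt{17}$ via the extra bound $r_k(x)<2^{-j+1}$), and your explicit treatment of the sub-case $\langle x,e_k\rangle<0$ is a small point the paper leaves implicit.
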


 \begin{proof}

We decompose $x$ as $x=x_{\parallel}+x_{\perp}$ with $x_{\parallel}  $ being the component parallel to $M_k$ and consider the difference
$$ \Delta=2^{-j-1}-|x_{\parallel}|.$$\\

\begin{minipage}{0.55\textwidth}
\textit{Case 1:} Let $x\in E_j$ and $\Delta>0$, i.e.  $$|x_{\parallel}|<2^{-j-1}.$$
 It is clear that 
\begin{align}
\label{csillagaug13}
r_k(x)\leq r_{k,j}(x).
\end{align}
By geometric considerations we see that for $x\in E_j$ it holds 
$$r_k^2(x)+|x_{\parallel}|^2=|x|^2\geq (2^{-j-1})^2=(\Delta+|x_{\parallel}|)^2, $$
hence
\begin{align}
\label{egy3}
r_k^2(x)\geq \Delta^2+2\Delta |x_{\parallel}| \geq \Delta^2.
\end{align}
	\end{minipage}\hfill \begin{minipage}{0.4\textwidth}
	\centering              
	\def\svgwidth{210pt}    
\begingroup%
  \makeatletter%
  \providecommand\color[2][]{%
    \errmessage{(Inkscape) Color is used for the text in Inkscape, but the package 'color.sty' is not loaded}%
    \renewcommand\color[2][]{}%
  }%
  \providecommand\transparent[1]{%
    \errmessage{(Inkscape) Transparency is used (non-zero) for the text in Inkscape, but the package 'transparent.sty' is not loaded}%
    \renewcommand\transparent[1]{}%
  }%
  \providecommand\rotatebox[2]{#2}%
  \ifx\svgwidth\undefined%
    \setlength{\unitlength}{291.39924903bp}%
    \ifx\svgscale\undefined%
      \relax%
    \else%
      \setlength{\unitlength}{\unitlength * \real{\svgscale}}%
    \fi%
  \else%
    \setlength{\unitlength}{\svgwidth}%
  \fi%
  \global\let\svgwidth\undefined%
  \global\let\svgscale\undefined%
  \makeatother%
  \begin{picture}(1,0.84153186)%
    \put(0,0){\includegraphics[width=\unitlength,page=1]{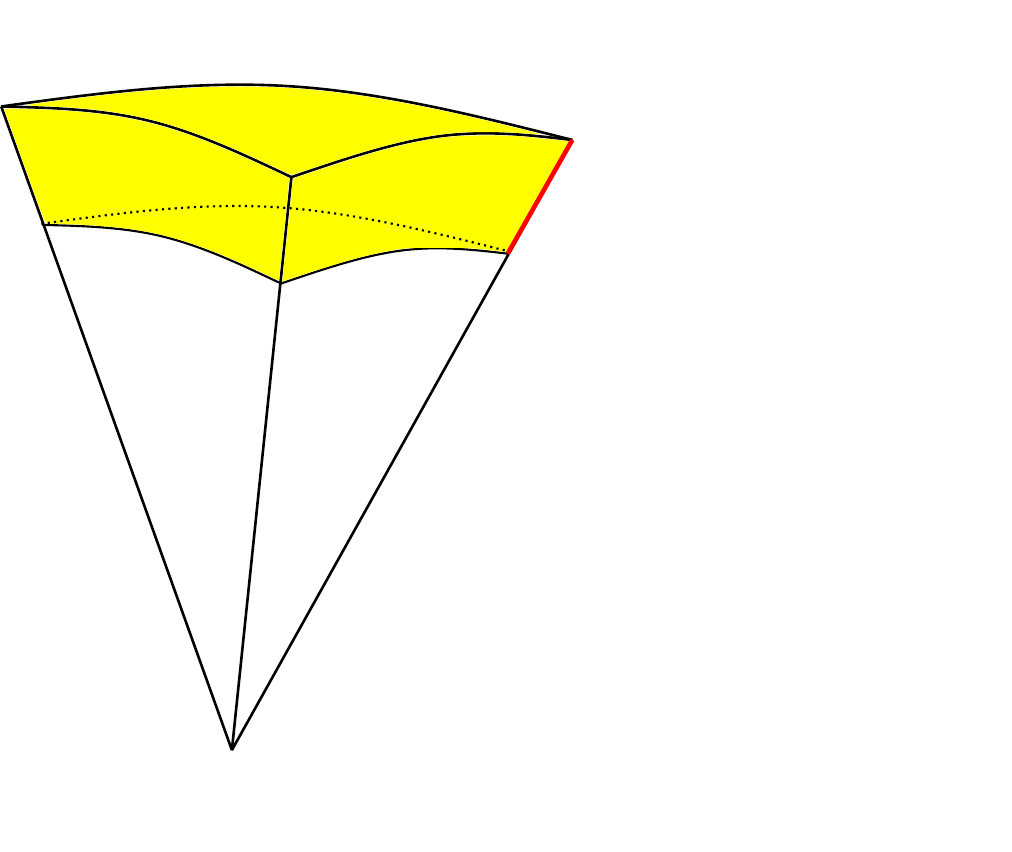}}%
    \put(0.57095121,0.70718618){\color[rgb]{0.08235294,0,0}\makebox(0,0)[lb]{\smash{$2^{-j+1}$}}}%
    \put(0.52304543,0.58866348){\color[rgb]{0,0,0}\makebox(0,0)[lb]{\smash{$2^{-j-1}$}}}%
    \put(0,0){\includegraphics[width=\unitlength,page=2]{rkscucc1.pdf}}%
    \put(0.36295771,0.79818332){\color[rgb]{1,0,0}\makebox(0,0)[lb]{\smash{$M_k\cap E_j$}}}%
    \put(0,0){\includegraphics[width=\unitlength,page=3]{rkscucc1.pdf}}%
    \put(0.01586858,0.81248291){\color[rgb]{0,0,0}\makebox(0,0)[lb]{\smash{$E_j\cap K$}}}%
    \put(0,0){\includegraphics[width=\unitlength,page=4]{rkscucc1.pdf}}%
    \put(0.35258395,0.5345843){\color[rgb]{0,1,0}\makebox(0,0)[lb]{\smash{$\Delta$}}}%
    \put(0.59230181,0.54386893){\color[rgb]{0.78431373,0.38823529,0.76862745}\makebox(0,0)[lb]{\smash{$r_{k,j}(x)$}}}%
    \put(0.71441673,0.49880037){\color[rgb]{0,0,1}\makebox(0,0)[lb]{\smash{$x_{\perp}=r_k(x)$}}}%
    \put(0.32820019,0.38500318){\color[rgb]{0,0,1}\makebox(0,0)[lb]{\smash{$x_{\parallel}$}}}%
    \put(0.56767329,0.17582905){\color[rgb]{0,0,1}\makebox(0,0)[lb]{\smash{$x_{\parallel}$}}}%
    \put(0.26527129,0.03015899){\color[rgb]{0,0,1}\makebox(0,0)[lb]{\smash{$x_{\perp}$}}}%
    \put(0.42608254,0.21038039){\color[rgb]{0,0,1}\makebox(0,0)[lb]{\smash{$|x|$}}}%
    \put(0.70164957,0.44123272){\color[rgb]{0,0,1}\makebox(0,0)[lb]{\smash{$x\in E_j$}}}%
    \put(0,0){\includegraphics[width=\unitlength,page=5]{rkscucc1.pdf}}%
  \end{picture}%
\endgroup%
  
	\end{minipage}\\[0.3cm]

Since $r_{k,j}^2(x)=r_k^2(x)+\Delta^2$, using \eqref{egy3} we obtain 
$
\ r_{k,j}^2(x)\leq 2 r_k^2(x),\ 
$
i.e.,
\begin{align}
\label{ketto2}
r_{k,j}(x)\lesssim r_k(x).
\end{align}
Therefore \eqref{csillagaug13} and \eqref{ketto2} imply that
\begin{align*}
r_{k,j}(x) \sim r_{k}(x) \qquad \text{for} \quad x\in E_j, \  \Delta>0.
\end{align*}

	\begin{minipage}{0.5\textwidth}
	\textit{Case 2:} Let $x\in E_j$ and $\Delta\leq0$, then $$2^{-j-1}\leq |x_{\parallel}|\leq 2^{-j+1}.$$ 
	Obviously in this case we have
$  r_k(x)=r_{k,j}(x).$  
	\end{minipage}\hfill \begin{minipage}{0.45\textwidth}
	\def\svgwidth{220pt}    
\begingroup%
  \makeatletter%
  \providecommand\color[2][]{%
    \errmessage{(Inkscape) Color is used for the text in Inkscape, but the package 'color.sty' is not loaded}%
    \renewcommand\color[2][]{}%
  }%
  \providecommand\transparent[1]{%
    \errmessage{(Inkscape) Transparency is used (non-zero) for the text in Inkscape, but the package 'transparent.sty' is not loaded}%
    \renewcommand\transparent[1]{}%
  }%
  \providecommand\rotatebox[2]{#2}%
  \ifx\svgwidth\undefined%
    \setlength{\unitlength}{343.02151298bp}%
    \ifx\svgscale\undefined%
      \relax%
    \else%
      \setlength{\unitlength}{\unitlength * \real{\svgscale}}%
    \fi%
  \else%
    \setlength{\unitlength}{\svgwidth}%
  \fi%
  \global\let\svgwidth\undefined%
  \global\let\svgscale\undefined%
  \makeatother%
  \begin{picture}(1,0.63770491)%
    \put(0,0){\includegraphics[width=\unitlength,page=1]{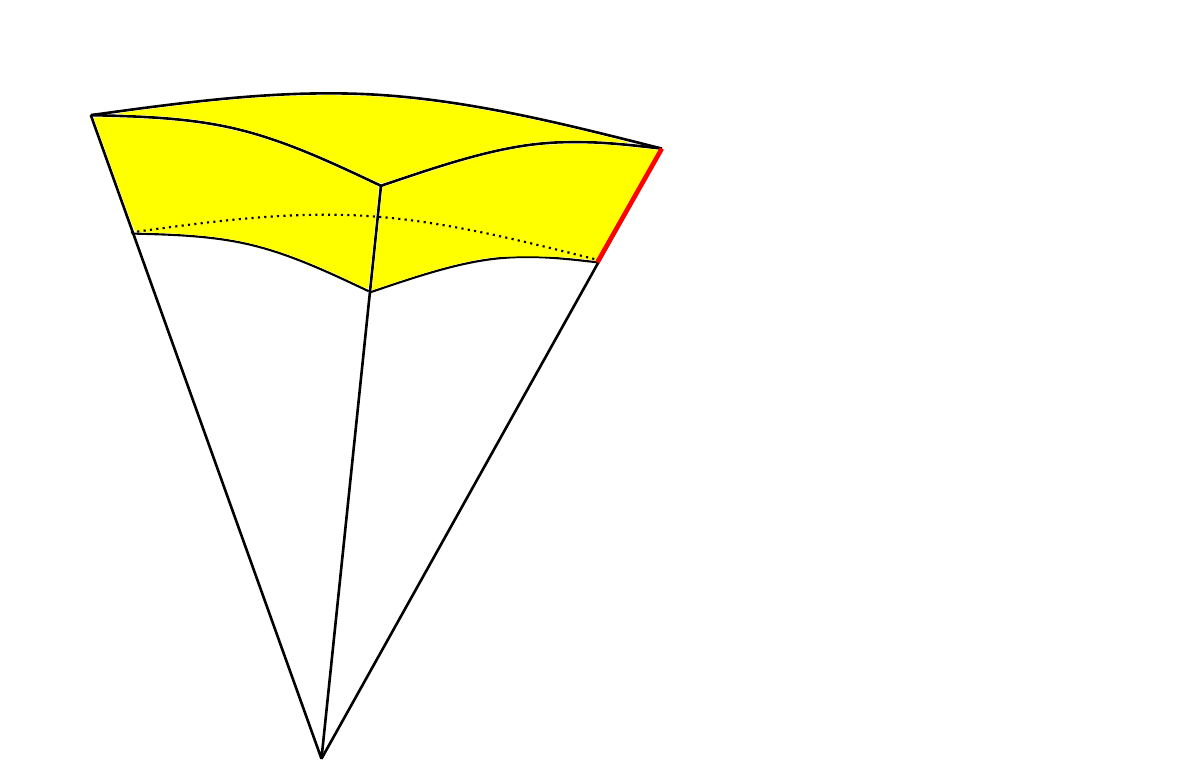}}%
    \put(0.01139205,0.53507796){\color[rgb]{0.08235294,0,0}\makebox(0,0)[lb]{\smash{$2^{-j+1}$}}}%
    \put(-0.0017651,0.42216604){\color[rgb]{0,0,0}\makebox(0,0)[lb]{\smash{$2^{-j-1}$}}}%
    \put(0,0){\includegraphics[width=\unitlength,page=2]{rkscucc2.pdf}}%
    \put(0.38354958,0.59360717){\color[rgb]{1,0,0}\makebox(0,0)[lb]{\smash{$M_k\cap E_j$}}}%
    \put(0,0){\includegraphics[width=\unitlength,page=3]{rkscucc2.pdf}}%
    \put(0.07687073,0.61302761){\color[rgb]{0,0,0}\makebox(0,0)[lb]{\smash{$E_j\cap K$}}}%
    \put(0,0){\includegraphics[width=\unitlength,page=4]{rkscucc2.pdf}}%
    \put(0.3656685,0.2722469){\color[rgb]{0,0,1}\makebox(0,0)[lb]{\smash{$x_{\parallel}$}}}%
    \put(0.45104799,0.19741791){\color[rgb]{0,0,1}\makebox(0,0)[lb]{\smash{$|x|$}}}%
    \put(0,0){\includegraphics[width=\unitlength,page=5]{rkscucc2.pdf}}%
    \put(0.61855528,0.50390075){\color[rgb]{0,0,1}\makebox(0,0)[lb]{\smash{$x_{\perp}=r_k(x)=r_{k,j}(x)$}}}%
    \put(0.6146481,0.42263171){\color[rgb]{0,0,1}\makebox(0,0)[lb]{\smash{$x\in E_j$}}}%
    \put(0,0){\includegraphics[width=\unitlength,page=6]{rkscucc2.pdf}}%
  \end{picture}%
\endgroup%
  
	\end{minipage}

Together  Cases 1 and 2 yield 
$\ r_{k,j}\sim r_k \ \text{ on } \ E_j. $ 
 \end{proof}

 \begin{remark}
 	\label{remark5}
 	\bit 
 \item[(i)] 	Below we use the notation $V_{\beta,\delta}^{l,p}(K_j,S_j)$. This is to be understood in the same way  as already explained in Remark \ref{notation-expl}. In particular, in view of the definition of the spaces  
 $V_{\beta,\delta}^{l,p}(K,S)$, we replace the weight functions $r_k$ by $r_{k,j}$ (which according to Lemma \ref{equiv-dist} are equivalent on the layers $K_j$), and the integral domain $K$ by $K_j$ in the  norm, respectively.
 	
 \item[(ii)] 	\hfill 
 
 \begin{minipage}{0.5\textwidth} \vspace{-4.5cm}
 Let $\theta$ be the angle of the edge $M_k$ and of the line $\overrightarrow{0x}$. 
 	Then \begin{align*}
 	    \sin \theta=\frac{r_k(x)}{\rho_0 (x)}=\frac{r_k(2^jx)}{\rho_0 (2^jx)}
 	\end{align*} and we see that the distance functions 
 	\begin{align*}
 	r_k(2^jx)=2^j|x|\sin\theta =2^j r_k(x)
 	\end{align*}
 	and 
 	\begin{align*}
 	\rho_0(2^jx)=|2^jx|=2^j|x|=2^j \rho_0(x)
 	\end{align*}
 are homogeneous w.r.t. isotropic dilations. 
 
 \end{minipage}\hfill 
 	\begin{minipage}[b]{0.3\textwidth}
 		\def\svgwidth{150pt}
\begingroup%
  \makeatletter%
  \providecommand\color[2][]{%
    \errmessage{(Inkscape) Color is used for the text in Inkscape, but the package 'color.sty' is not loaded}%
    \renewcommand\color[2][]{}%
  }%
  \providecommand\transparent[1]{%
    \errmessage{(Inkscape) Transparency is used (non-zero) for the text in Inkscape, but the package 'transparent.sty' is not loaded}%
    \renewcommand\transparent[1]{}%
  }%
  \providecommand\rotatebox[2]{#2}%
  \ifx\svgwidth\undefined%
    \setlength{\unitlength}{213.26626165bp}%
    \ifx\svgscale\undefined%
      \relax%
    \else%
      \setlength{\unitlength}{\unitlength * \real{\svgscale}}%
    \fi%
  \else%
    \setlength{\unitlength}{\svgwidth}%
  \fi%
  \global\let\svgwidth\undefined%
  \global\let\svgscale\undefined%
  \makeatother%
  \begin{picture}(1,0.98800578)%
    \put(0,0){\includegraphics[width=\unitlength,page=1]{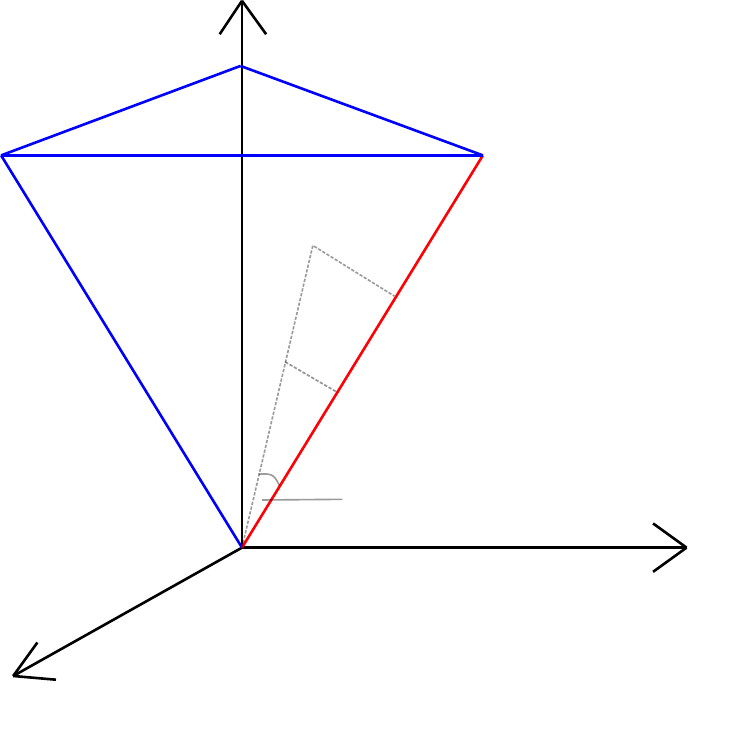}}%
    \put(0.0003391,0.00909176){\color[rgb]{0,0,0}\makebox(0,0)[lb]{\smash{$x_1$}}}%
    \put(0.84173796,0.15049672){\color[rgb]{0,0,0}\makebox(0,0)[lb]{\smash{$x_2$}}}%
    \put(0.37953905,0.94176122){\color[rgb]{0,0,0}\makebox(0,0)[lb]{\smash{$x_3$}}}%
    \put(0.65884883,0.72817131){\color[rgb]{1,0,0}\makebox(0,0)[lb]{\smash{$M_k$}}}%
    \put(0.33419768,0.48605761){\color[rgb]{0,0,0}\transparent{0.39215687}\makebox(0,0)[lb]{\smash{$x$}}}%
    \put(0.33181198,0.63781519){\color[rgb]{0,0,0}\transparent{0.39215687}\makebox(0,0)[lb]{\smash{$2^jx$}}}%
    \put(0.54592302,0.4577337){\color[rgb]{0,0,0}\transparent{0.39215687}\makebox(0,0)[lb]{\smash{$r_k(x)$}}}%
    \put(0.63775623,0.60630488){\color[rgb]{0,0,0}\transparent{0.39215687}\makebox(0,0)[lb]{\smash{$r_k(2^jx)$}}}%
    \put(0.48625978,0.29545396){\color[rgb]{0,0,0}\transparent{0.39215687}\makebox(0,0)[lb]{\smash{$\theta$}}}%
    \put(0,0){\includegraphics[width=\unitlength,page=2]{aug7.pdf}}%
  \end{picture}%
\endgroup%

 	\end{minipage} 
 
 \eit 
 \end{remark}

 After these preparations we can prove Theorem \ref{extop}. 
 
 \begin{proof}[Proof of Theorem \ref{extop}]
 	We decompose $K$ into the dyadic layers $K_j$ from \eqref{layerofthecone},
 	$$K=\displaystyle\bigcup_{j\geq j_0} K_j, $$
where $j_0\in \mathbb{Z}$ is fixed and depends on the cone.\\
\textit{Step 1:} Let $\mathfrak{E}_0$ denote the extension operator from Lemma \ref{layerextopmod} with regard to the layer $K_0$. Then we define an extension operator on $K_j$ via
$$\mathfrak{E}_j=T_{-j}\circ \mathfrak{E}_0\circ T_j, $$
where the dilation operator $T_j$ is defined as $T_ju(x)=u(2^{j}x)$. The inverse operator is given by $T_{-j}=T_j^{-1}.$ 
In order to prove uniform boundedness of the family of extension operators $(\mathfrak{E}_j)_j$ we first need to calculate the operator norms of the family of dilation operators $(T_j)_j$.  In particular, for $u\in  V_{\beta,\delta}^{l,p}(K_0,S_0)$ using the homogeneity of $r_k$ and $\rho_0$, we compute 
%
\begin{align*}
    & \|T_ju| V_{\beta,\delta}^{l,p}(K_j,S_j)  \|^p\\
    & \sim \displaystyle\sum_{|\alpha|\leq l}\int_{K_j}\rho_0(x)^{(\beta-l+|\alpha|)p}\prod_{k=1}^{n} \left(  \frac{r_k(x)}{\rho_0(x)}  \right)^{(\delta_k-l+|\alpha|)p}2^{j|\alpha|p} |(\partial^{\alpha}u)(2^jx)   |^p\: dx\\
    & =\sum_{|\alpha|\leq l}\int_{K_j} 2^{-j(\beta-l+|\alpha|)p} \rho_0(2^jx)^{(\beta-l+|\alpha|)p}\prod_{k=1}^{n} \left(  \frac{r_k(2^jx)}{\rho_0(2^jx)}  \right)^{(\delta_k-l+|\alpha|)p}2^{j|\alpha|p} |(\partial^{\alpha}u)(2^jx)   |^p\: dx\\
    & = 2^{-j(\beta-l)p-j3}\|u|V_{\beta,\delta}^{l,p}(K_0,S_0)\|^p,
\end{align*}
with the integral substitution $y:=2^jx$ and $dy=2^{j3}dx$.   Hence,
\begin{align}
\label{nov4du}
    \|T_j \|\sim 2^{-j\left(\beta-l+\frac 3p\right)}\qquad \forall \ j\in \mathbb{Z}.  
\end{align} 

A similar estimate holds for $T_{-j}.$ 
We conclude
\begin{align*}
\|\mathfrak{E}_j:V_{\beta,\delta}^{l,p} (K_j,S_j)\rightarrow V_{\beta,\delta}^{l,p} (\mathbb{R}^3,S)\| &\lesssim \|T_{-j}||\cdot \|\mathfrak{E}_0:V_{\beta,\delta}^{l,p} (K_0,S_0)\rightarrow V_{\beta,\delta}^{l,p} (\mathbb{R}^3,S)\|\cdot \|T_j\|\\
& =
\|\mathfrak{E}_0:V_{\beta,\delta}^{l,p} (K_0,S_0)\rightarrow V_{\beta,\delta}^{l,p} (\mathbb{R}^3,S)\|,
\end{align*}
independent of $j\in\mathbb{Z}$. Since $\mathfrak{E}_0$ is a bounded operator according to Lemma \ref{layerextopmod}, $(\mathfrak{E}_j)_{j\in \mathbb{Z}}$ is a family of uniformly bounded operators and 
\begin{align*}
    \|\mathfrak{E}_j \varphi |V_{\beta,\delta}^{l,p} (\mathbb{R}^3,S) \| \lesssim \|\varphi|V_{\beta,\delta}^{l,p} (K_j,S_j)\| \qquad \forall \ j\in \mathbb{Z}.
\end{align*}
\textit{Step 2:}  We define the extension operator $\mathfrak{E}$ via a suitable combination of the operators $\mathfrak{E}_j$.\\
\textit{Substep 2.1:} Let $\varphi_0:\mathbb{R}^3\longrightarrow [0,1]$ be a radially symmetric smooth function, such that 
\begin{align*}
    \text{supp}\: \varphi_0 \subset  \left\{\frac{1}{2} < |x|<2  \right\} =: E_0,
\end{align*}
and
\begin{align*}
         \varphi_0  \equiv 1 \quad \text{on}\quad \left\{\frac{1}{2}+\varepsilon<|x|<2-\varepsilon   \right\},
\end{align*}
for some $\varepsilon>0$ sufficiently small. 
Moreover, we put 
$ 
    \varphi_j(x):=\varphi_0(2^jx),
$ 
for which we have
\begin{align*}
     \text{supp}\: \varphi_j \subset  \{2^{-j-1} < |x|<2^{-j+1} \} =: E_j,
\end{align*}
and
\begin{align*}
 \varphi_j  \equiv 1 \quad \text{on}\quad \left\{2^{-j}\left(\frac{1}{2}+\varepsilon\right)<|x|<2^{-j}(2-\varepsilon )  \right\}.    
\end{align*}
From the construction it follows that
\begin{align}
\label{nov3cond1}
    \displaystyle\sum_{j\in \mathbb{Z}} \varphi_j(x) \sim 1 \quad \text{for all}\quad x\in \mathbb{R}^3.
\end{align}
Furthermore, since $\varphi_0$ is smooth and has compact support, we see that 
\begin{align}
    \label{nov3cond2}
 | \partial^{\alpha} \varphi_0 | \leq c_{\alpha} \qquad \text{and} \qquad  |\partial^{\alpha} \varphi_j(x)|= |\partial^{\alpha}(\varphi_0(2^jx))   |\leq c_{\alpha}2^{j|\alpha|},
\end{align}
for some constant $c_{\alpha}$. 
By \eqref{nov3cond1} and \eqref{nov3cond2}   the family $(\varphi_j)_{j\in \mathbb{Z}}$ forms a suitable localization for the cone $K$, where $\text{supp }\varphi_j|_K\subset K_j$, and we can apply Lemma \ref{lemma2}.\\

\textit{Substep 2.2:} Now we put
$$\Phi(x)=\displaystyle\sum_{j\in \mathbb{Z}}\varphi_j(x)^2,\quad x\neq 0, $$
and define the extension operator via 
$$\mathfrak{E}u(x):=\frac{1}{\Phi(x)} \displaystyle\sum_{j\in \mathbb{Z}} \varphi_j(x)\mathfrak{E}_j(\varphi_j|_K u)(x). $$
The mapping $\mathfrak{E}$ is well-defined because from the construction we deduce  $ \Phi(x)\sim 1$ for all $x\neq 0$. 
Moreover, it is immediately verified that $\mathfrak{E}u$ defines an extension of $u$ to $ \mathbb{R}^3$.

\textit{Step 3:} It remains to show that
\begin{align}
    \label{nov4} 
    \|\mathfrak{E}u|V_{\beta,\delta}^{l,p}(\mathbb{R}^3,S)  \| \lesssim \|u|V_{\beta,\delta}^{l,p}(K,S) \|.
\end{align} 
For the calculations to come we use the fact that the localization  of Lemma \ref{lemma2} also works for the spaces $V_{\beta,\delta}^{l,p}(\mathbb{R}^3,S)$  
and that the functions $\varphi_j,\: j\in \mathbb{Z}$, are multipliers in the spaces $V_{\beta,\delta}^{l,p}$. 
With this we estimate 
\begin{align*}
   \|\mathfrak{E}& u|V_{\beta,\delta}^{l,p}(\mathbb{R}^3,S)\|^p  \\
   & \sim \displaystyle\sum_{j\geq j_0}  \|\varphi_j\mathfrak{E} u|V_{\beta,\delta}^{l,p}(\mathbb{R}^3,S)\|^p 
    \sim \displaystyle\sum_{j\geq j_0}  2^{-j(\beta-l)p-j3} \|(\varphi_j \mathfrak{E}u)(2^{-j}\cdot)|V_{\beta,\delta}^{l,p}(\mathbb{R}^3,S)\|^p\\
   & = \displaystyle\sum_{j\geq j_0}  2^{-j(\beta-l)p-j3} \| \varphi_0(\cdot) \mathfrak{E}u (2^{-j}\cdot)|V_{\beta,\delta}^{l,p}(\mathbb{R}^3,S)\|^p\\
   &=\displaystyle\sum_{j\geq j_0} 2^{-j(\beta-l)p-j3}
   \left\|
   \underbrace{\varphi_0(x)}_{\text{supp}\: \cdot \subset E_0} \underbrace{\frac{1}{\Phi(2^{-j}x)}}_{\sim 1} \displaystyle\sum_{m\in\mathbb{Z}} \underbrace{\varphi_m(2^{-j}x)}_{\substack{\neq 0\:\text{for}\\ |m-j|\leq 1}}\mathfrak{E}_m(\varphi_m|_K u)(2^{-j}x)
   \bigg|V_{\beta,\delta}^{l,p}(\mathbb{R}^3,S) \right\|^p\\
      &\lesssim\displaystyle\sum_{j\geq j_0} 2^{-j(\beta-l)p-j3}\|\mathfrak{E}_j(\varphi_j|_K u)(2^{-j}\cdot) |V_{\beta,\delta}^{l,p}(\mathbb{R}^3,S)\|^p\\
        &\sim\displaystyle\sum_{j\geq j_0} \|\mathfrak{E}_j(\varphi_j|_K u) |V_{\beta,\delta}^{l,p}(\mathbb{R}^3,S)\|^p\\
&\lesssim \sum_{j\geq j_0} 
\|\varphi_j|_K u |V_{\beta,\delta}^{l,p}(K_j,S_j)\|^p
\sim \sum_{j\geq j_0} 
\|\varphi_j u |V_{\beta,\delta}^{l,p}(K,S)\|^p
 \sim \| u |V_{\beta,\delta}^{l,p}(K,S)\|^p,
\end{align*}
which completes the proof.
 \end{proof}



\begin{thebibliography}{KemXX}



\bibitem{BG97}
Babuska, I. and Guo, B. (1997).
\newblock Regularity of the solutions for elliptic problems on nonsmooth domains in $\real^3$, Part I: countably normed spaces on polyhedral domains.
\newblock {\em Proc. Roy. Soc. Edinburgh Sect. A} {\bf  127}, 77--126.


\bibitem{Cio20}
{Cioica-Licht, P. A.} (2020). 
{\em An $L_p$-theory for the stochastic heat equation on angular domains in $\real^2$ with mixed weights}.
{Preprint}, arXiv:2003.03782v2 [math.PR]. 

\bibitem{CLK19}
{Cioica-Licht, P. A., Kim, K.-H., and Lee, K. } (2019). 
{On the regularity of the stochastic heat equation on polygonal domains in $\real^2$}.
{\em J. Differential Equations}, {\bf 267}, 6447--6479. 


\bibitem{CDN10}
{Costabel, M.,  Dauge, M.,  and Nicaise, S.} (2010). 
{Mellin analysis of weighted Sobolev spaces with nonhomogeneous norms on cones}.
{\em Around the research of {V}ladimir {M}az'ya {I},  Int. Math. Ser. (N. Y.)} {\bf 11}, Springer, New York, 105--136.


 \bibitem{DDV97}
Dahlke, S. and DeVore, R. A. (1997). 
\newblock{Besov regularity for elliptic boundary value problems.} 
\newblock {\em Comm. Partial Differential Equations} {\bf 22}, No. 1-2, 1--16.

  \bibitem{SMCW}
Dahlke, S., Hansen, M., Schneider, C., Sickel, W. (2020). 
\newblock{\em Properties of Kondratiev spaces.} 
\newblock Preprint.


  \bibitem{DS08}
Dahlke, S., Sickel, W. (2008). 
\newblock{Besov Regularity for the Poisson Equation in Smooth and Polyhedral Cones.} 
\newblock {\em Sobolev spaces in mathematics II. Applications in analysis and partial differential equations}, 123--145. 



\bibitem{Han15}
Hansen, M. (2015).
\newblock{\em Nonlinear approximation rates and Besov regularity for elliptic PDEs on polyhedral domains.}
\newblock {\em Found. Comput. Math.} {\bf 15}, 561--589.

%


\bibitem{JK95}
Jerison, D and Kenig, C. E. (1995). 
\newblock{The inhomogeneous Dirichlet problem in Lipschitz domains.}
\newblock {\em J. of Func. Anal.} {\bf 130}, 161--219. 


\bibitem{Kon1}
Kondratiev, V.~A.  (1967).  
\newblock Boundary value problems for elliptic equations in domains with conical and angular points.
{\em Trudy Moskov. Mat. Obshch.} {\bf  16}, 209--292. English translation in Trans. Moscow Math. Soc. {Vol. 16}, 227--313.


 
\bibitem{Kon2}
Kondratiev, V.~A. (1977).  
\newblock Singularities of the solution  of the Dirichlet problem 
for a second order elliptic equation in the neighborhood of an edge. 
\newblock {\em Differ. Uravn.} {\bf  13}, No. 11, 2026--2032.
  





\bibitem{Kuf}
Kufner, A. (1980).
\newblock{\em Weighted Sobolev spaces.}
\newblock Teubner-Texte zur Mathematik. 


\bibitem{Ku}
Kufner, A. and S\"andig, A.-M. (1987).  Some applications of weighted Sobolev spaces.
{\em Teubner-Texte Math.} {\bf 100}, Teubner, Leipzig.

 

  \bibitem{MR10}
Maz'ya, V. G., Rossmann, J. (2010). 
\newblock{\em Elliptic equations in polyhedral domains.} 
\newblock Mathematical Surveys and Monographs {\bf 162}, AMS, Providence, RI. 

\bibitem{NistorMazzucato}
{Mazzucato, A. and  Nistor, V.} (2010). 
\newblock {Well posedness and regularity for the elasticity equation with mixed boundary conditions on
			polyhedral domains and domains with cracks}. 
\newblock{\em  Arch. Ration. Mech. Anal.} {\bf 195}, 25--73.



  \bibitem{Piep}
Pieperbeck, J. (2019). 
\newblock{ Fortsetzungsoperatoren in
Funktionenräumen.} 
\newblock {\em Master thesis}, Fakultät für Mathematik, Ruhr-Universität Bochum. 



\bibitem{Stein}
Stein, E. M. (1970).
\newblock{\em Singular Integrals and Differentiability Properties of Functions.}
\newblock Princeton University Press, Princeton. 


\end{thebibliography}
\end{document}